\documentclass[12pt]{amsart}
\usepackage{amssymb,latexsym,amsmath,amsthm,amscd}
\usepackage{setspace}
\usepackage{verbatim}
\usepackage[active]{srcltx}
\usepackage[all]{xy} \xyoption{arc}
\usepackage[left=3cm,top=2cm,right=3cm,bottom = 2cm]{geometry}
\usepackage{graphicx}
\usepackage[usenames,dvipsnames]{color}
\usepackage{hyperref}

\theoremstyle{plain}
\newtheorem{thm}{Theorem}[section]
\newtheorem{lem}[thm]{Lemma}
\newtheorem{prop}[thm]{Proposition}
\newtheorem{cor}[thm]{Corollary}

\theoremstyle{definition}
\newtheorem{dfn}[thm]{Definition}
\newtheorem{ex}[thm]{Example}

\theoremstyle{remark}
\newtheorem{rmk}[thm]{Remark}

\newcommand{\cA}{\mathcal{A}}

\newcommand{\cL}{\mathcal{L}}
\newcommand{\cM}{\mathcal{M}}

\newcommand{\cO}{\mathcal{O}}

\newcommand{\cS}{\mathcal{S}}

\newcommand{\cV}{\mathcal{V}}
\newcommand{\cW}{\mathcal{W}}



\newcommand{\frakh}{\mathfrak{h}}

\newcommand{\veps}{\varepsilon}



\DeclareMathOperator{\Tr}{Tr}

\DeclareMathOperator{\GL}{GL}

\DeclareMathOperator{\SL}{SL}
\DeclareMathOperator{\PSL}{PSL}

\newcommand*{\df}{\mathrel{\vcenter{\baselineskip0.5ex \lineskiplimit0pt
                     \hbox{\scriptsize.}\hbox{\scriptsize.}}} =}


\providecommand{\abs}[1]{\left\lvert#1\right\rvert}

\providecommand{\twomat}[4]{\left(\begin{matrix}#1&#2\\#3&#4\end{matrix}\right)}
\providecommand{\stwomat}[4]{\left(\begin{smallmatrix}#1&#2\\#3&#4\end{smallmatrix}\right)}


\newcommand{\QQ}{\mathbf{Q}}
\newcommand{\FF}{\mathbf{F}}
\newcommand{\CC}{\mathbf{C}}

\newcommand{\ZZ}{\mathbf{Z}}
\newcommand{\PP}{\mathbf{P}}

\newcommand{\cMbar}{\overline{\cM}}

\newcommand{\cVbar}{\overline{\cV}}

\newcommand{\cSbar}{\overline{\cS}}

\newcommand{\orb}[2]{#1\backslash\!\!\backslash #2}

\newcommand{\sfrac}[2]{\frac{#1}{#2}}
\newcommand{\cref}[1]{\ref{#1}}

\DeclareMathOperator{\Mp}{Mp}

\pagenumbering{arabic}
\pagestyle{headings}
\setcounter{secnumdepth}{4}
\setcounter{tocdepth}{2}
\setlength{\parindent}{1cm}

\begin{document}
\title[Generating weights for Weil Representations]
{Generating weights for the Weil representation attached to an even order cyclic quadratic module}

\author{Luca Candelori}
\address{Department of Mathematics, Louisiana State University, Baton Rouge, LA, USA}
\email{lcandelori@lsu.edu}

\author{Cameron Franc}
\address{Department of Mathematics \& Statistics, University of Saskatchewan, \mbox{Saskatoon}, SK, Canada}
\email{franc@math.usask.ca}

\author{Gene S. Kopp}
\address{Department of Mathematics, University of Michigan, Ann Arbor, MI, USA}
\email{gkopp@umich.edu}
\thanks{This work was supported in part by NSF grant DMS-1401224.}

\date{}

\begin{abstract}
We develop geometric methods to study the generating weights of free modules of vector-valued modular forms of half-integral weight, taking values in a complex representation of the metaplectic group. We then compute the generating weights for modular forms taking values in the Weil representation attached to cyclic quadratic modules of order $2p^r$, where $p\geq 5$ is a prime. We also show that the generating weights approach a simple limiting distribution as $p$ grows, or as $r$ grows and $p$ remains fixed.
\end{abstract}

\keywords{vector-valued modular form; half-integral weight; Weil representation; quadratic module; generating weights; metaplectic group; metaplectic orbifold; critical weights; Serre duality; Dirichlet class number formula; imaginary quadratic field; positive-definite lattice; quadratic form; theta function}

\subjclass[2010]{11F12, 11F23, 11F27, 11F37, 11F99, 11L99}

\maketitle

\section{Introduction}

The aim of this paper is to compute the generating weights of modules of modular forms associated to Weil representations attached to finite quadratic modules of order $2p^r$. Our approach is to extend the main geometric results developed in \cite{CandeloriFranc} so that they can be applied in the half-integral weight setting of the present paper. In the half-integral weight setting, one can replace the weighted projective line $\PP(4,6)$ of \cite{CandeloriFranc} with $\PP(8,12)$, and thereby recover the results of \cite{CandeloriFranc} with natural modifications. The modifications are explained in Section \ref{section:vvaluedModularForms}.

The computation of generating weights is a fundamental problem in the theory of vector-valued modular forms (see \cite{Marks}, \cite{CandeloriFranc}, \cite{FrancMason:Structure}), which is equivalent to determining how certain vector bundles decompose into line bundles \cite{CandeloriFranc}. For unitary representations of $\SL_2(\ZZ)$, the only obstacle to such problems is presented by the forms of weight one, while for nonunitary representations, there could be a greater number of problematic weights (although always finitely many by \cite{Mason1} and \cite{CandeloriFranc}). In the half-integral weight setting, the authors had believed that---even if one only considers unitary representations of $\Mp_2(\ZZ)$---the multiplicities of the three critical weights $1/2$, $1$ and $3/2$ would all be difficult to compute. It turns out that one can use results of Skoruppa \cite{Skoruppa}, \cite{SkoruppaCritical} and Serre-Stark \cite{SerreStark}, along with Serre duality, to handle weights $1/2$ and $3/2$ in many cases, and eliminate weight one by imposing parity restrictions (analogous to restricting to representations of $\PSL_2(\ZZ)$ versus odd representations of $\SL_2(\ZZ)$). The general class of Weil representations that we consider in this paper is introduced in Section \ref{s:quadraticmodules} below. Section \ref{s:specialcase} specializes to certain cyclic Weil representations where it is possible to compute the generating weights of the corresponding module of vector-valued modular forms using the results discussed above. The result of these computations can be found in Table \ref{table:weightMultiplicities} of Section \ref{s:specialcase} below.

The most interesting part of these computations turns out to be the evaluation of $\Tr(L)$, where $L$ denotes a so-called \emph{exponent matrix} for a representation $\rho$ of $\Mp_2(\ZZ)$. Such a matrix $L$ satisfies $\rho(T) = e^{2\pi i L}$, where $T$ denotes a lift of $\stwomat 1101$ to $\Mp_2(\ZZ)$. In Section \ref{s:specialcase}, we consider the finite quadratic module $\ZZ/2p^r\ZZ$ with the quadratic form $q(x) = \frac{1}{4p^r}x^2$, where $p \geq 5$ is an odd prime.
 If $\rho$ is the corresponding Weil representation, then one finds that
\[
\Tr(L) = \sum_{x=1}^{2p^r}\left\{-\frac{x^2}{4p}\right\},
\]
where $\{x\} \in [0,1)$ denotes the fractional part of a real number $x$. Theorem \ref{t:TrL} below shows that $\Tr(L)$ is asymptotic to $p^r = \frac{1}{2}\dim \rho$, with a correction term of order $O\left(p^{r/2+\varepsilon}\right)$ given by a simple function of the class number of $\QQ(\sqrt{-p})$, where the precise form of the correction term depends on the congruence class of $p$ mod $8$. The correction term arises from biases among quadratic residues in the interval from $0$ to $p/4$\footnote{Such biases and their connection to class numbers for the interval from $0$ to $p/2$ are well-known, but the corresponding biases for the interval from $0$ to $p/4$ appear to be less well-known (see e.g. \cite{BerndtChowla}).}. 

In Corollary \ref{c:limitingroots}, we observe that as $\dim \rho$ grows, the exponents become uniformly distributed, and the generating weights approach a simple limiting distribution centered around weights $11/2$ and $13/2$. This limiting distribution result is derived from the generating weight formula that is obtained by computing the Hilbert-Poincare series of the module of vector-valued modular forms associated to this Weil representation. It is interesting to ask whether there might be a more conceptual explanation for such asymptotic behaviour of the generating weights. We leave this as an open question.

\emph{Notation and terminology}: Let $e(x)\df e^{2\pi i x}$. Throughout, we also let $\zeta_n \df e^{2\pi i/n}$. If $x \in \QQ$, then let $\{x\}$ denote the unique rational number in $[0,1)$ such that $\{x\} = x$ in $\QQ/\ZZ$. In this paper, all representations are complex and finite-dimensional.

\section{Vector-Valued Modular Forms on the Metaplectic Group}
\label{section:vvaluedModularForms}

We begin by explaining how the results of \cite{CandeloriFranc}, concerning vector-valued modular forms of integral weight, extend to the half-integral weight case with only minor modifications. The metaplectic group $\Mp_2(\ZZ)$ is the unique nontrivial central extension
$$
1\rightarrow \mu_2 \rightarrow \Mp_2(\ZZ) \rightarrow \SL_2(\ZZ) \rightarrow 1.
$$
The elements of $\Mp_2(\ZZ)$ can be written as $\left(A,\phi(\tau)\right)$, where $A = \stwomat abcd \in \SL_2(\ZZ)$, and $\phi$ is a holomorphic function of $\tau \in \frakh:=\{\tau \in \CC: \Im[\tau]>0\}$ with $\phi^2 = c\tau + d$. Multiplication is defined as 
$$
(A_1,\phi_1(\tau))\cdot (A_2,\phi_2(\tau)) = (A_1A_2, \phi_1(A_2\tau)\phi_2(\tau)).
$$
The group $\Mp_2(\ZZ)$ is generated by
$$
T \df \left(\twomat 1101, 1\right), \quad S \df \left(\twomat 0{-1}10, \sqrt{\tau}\right),
$$
where $\sqrt{\tau}$ denotes the principal branch of the square root, possessing positive real part. Note that $T$ is of infinite order, while $S$ is of order 8. Let also
$$
Z \df S^2 = \left(\twomat {-1}00{-1}, i\right), \quad R \df ST = \left(\twomat 0{-1}11, \sqrt{\tau + 1}\right),
$$
which are of of orders 4 and 12, respectively. With this notation the two generating relations of $\Mp_2(\ZZ)$ are $S^2 = Z = R^3$. The metaplectic group $\Mp_2(\ZZ)$ acts on $\frakh$ by the fractional linear transformation action of its quotient $\SL_2(\ZZ)$. Let 
$$
\cM_{1/2} \df \orb{\Mp_2(\ZZ)}{\frakh}
$$
be the quotient in the category of orbifolds (see e.g. \cite{Hain} for definitions), which we call the {\em metaplectic orbifold}. The double covering map $\Mp_2(\ZZ) \rightarrow \SL_2(\ZZ)$ induces an orbifold map $\cM_{1/2} \rightarrow \cM_1 \df \orb{\SL_2(\ZZ)}{\frakh}$ to the modular orbifold. 

\begin{rmk}
As is well-known, the modular orbifold $\cM_1$ is the moduli space of elliptic curves $E/S$ defined over an analytic space. As explained in the first author's Ph.D. thesis \cite{CandeloriThesis}, the metaplectic orbifold $\cM_{1/2}$  can similarly be identified with the moduli space of pairs $(E,Q)$, where $Q$ is a quadratic form on the Hodge bundle of $E/S$. We will not make use of this moduli interpretation in this paper. 
\end{rmk}

The metaplectic orbifold admits a canonical compactification
$$
\cMbar_{1/2}\df \overline{\orb{\Mp_2(\ZZ)}{\frakh}},
$$
obtained by adding the cusp $\infty$, compatible with the canonical compactification $\cMbar_1$ of $\cM_1$. Using the well-known identification of $\cMbar_1$ with the weighted projective line $\PP(4,6)$, it is not hard to show that there is an orbifold isomorphism 
\begin{equation}
\label{eqn:isomorphismWithP(8,12)}
\cMbar_{1/2} \cong \PP(8,12).
\end{equation}
Therefore, $\cMbar_{1/2}$ can be viewed as an orbicurve of genus 0 with a marked point $\infty$, of generic stabilizer of order 4, and two points with stabilizers $\langle S \rangle$ and $\langle R \rangle$ of order $8$ and $12$, respectively.

Let $\rho: \Mp_2(\ZZ) \rightarrow \GL(V)$ denote a complex, finite-dimensional representation. 

\begin{dfn}
A {\em weakly holomorphic $\rho$-valued modular form} of weight $k\in \frac{1}{2}\ZZ$ is a holomorphic function $f: \frakh \rightarrow V$ such that
$$
f(\gamma \tau) = \phi^{2k}\rho(M)\,f(\tau)
$$
for all $M = (\gamma,\phi) \in \Mp_2(\ZZ)$. 
\end{dfn}

Let $\cL_{1/2}$ be the line bundle over $\cM_{1/2}$ defined by the $\cO^{\times}_{\frakh}$-valued 1-cocycle 
$$
(\gamma,\phi) \in \Mp_2(\ZZ) \longmapsto \phi,
$$
and, for any $k\in \frac{1}{2}\ZZ$, let $\cL_{k}\df \left(\cL_{1/2}\right)^{\otimes 2k}$. Let $\cV(\rho)$ be the local system on $\cM_{1/2}$ corresponding to $\rho$. Then, by definition, a $\rho$-valued modular form of half-integral weight $k$ is a holomorphic section of the vector bundle 
$$
\cV_k(\rho)\df \cV(\rho)\otimes_{\cO_{\cM_{1/2}}} \cL_k.
$$
As follows from the same arguments as in \cite{CandeloriFranc}, 
the vector bundles $\cV_k(\rho)$ admit extensions $\cVbar_{k,L}(\rho)$ to $\cMbar_{1/2}$, entirely determined by a linear map $L \in \mathrm{End}(V)$ such that
$$
\rho(T) = e^{2\pi i L}.
$$
We call $L$ a {\em choice of exponents}. If we further require that all the eigenvalues of $L$ have real part in $[0,1)$ (the {\em standard} choice of exponents), we obtain the so-called {\em canonical} extension $\cVbar_{k}(\rho)$. Another typical choice of $L$ is to require all the eigenvalues of $L$ to  have real part in $(0,1]$ (the {\em cuspidal} choice of exponents), in which case we  obtain the cuspidal vector bundle $\cSbar_{k}(\rho)$.

\begin{dfn}
The holomorphic global sections of $\cVbar_{k}(\rho)$ are called {\em holomorphic} $\rho$-valued modular forms of weight $k$. The vector space of all such modular forms is denoted by $M_k(\rho)$. The holomorphic global sections of $\cSbar_{k}(\rho)$ are called ($\rho$-valued) {\em cusp forms} of weight $k$. The vector space of all cusp forms is denoted by $S_k(\rho)$. More generally we write $M_{k,L}(\rho)$ for the space of global sections of $\cVbar_{k,L}(\rho)$, for any choice of exponents $L$.
\end{dfn}

By standard results in algebraic geometry, the spaces $M_{k,L}(\rho)$ are finite-dimensional, for any finite-dimensional complex representation $\rho$. Denote by
$$
M_L(\rho)\df \bigoplus_{k\in \frac{1}{2}\ZZ} M_{k,L}(\rho)
$$ 
the graded module of all $\rho$-valued holomorphic modular forms, viewed as a module over $M(1) = \CC[E_4,E_6]$. By the same arguments as in \cite{CandeloriFranc}, we have the following basic structure theorem concerning $M_L(\rho)$:

\begin{thm}
\label{thm:FreeModuleThm}
Let $\rho: \Mp_2(\ZZ) \rightarrow \GL(V)$ be any complex representation of dimension $n$. Then:
\begin{itemize}
\item[(i)] $M_L(\rho)$ is a free module of rank $n$ over $M(1)$. 
\item[(ii)] If $k_1\leq \ldots \leq k_n$, $k_j \in \frac{1}{2}\ZZ$, are the weights of the free generators, then 
$$
\sum_j k_j = 12\Tr(L).
$$
\item[(iii)] If $\rho$ is unitarizable, and $L$ is a standard choice of exponents, then $0\leq k_j \leq 23/2$. 
\end{itemize}

\end{thm}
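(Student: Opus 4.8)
The plan is to transplant the geometric argument of \cite{CandeloriFranc} from the modular orbicurve $\cMbar_1 \cong \PP(4,6)$ to $\cMbar_{1/2} \cong \PP(8,12)$, tracking the extra $\mu_2$ in $\Mp_2(\ZZ)$ and the half-integral weights. By definition $M_{k,L}(\rho) = H^0\big(\cMbar_{1/2}, \cVbar_{k,L}(\rho)\big)$ and $\cVbar_{k,L}(\rho) \cong \cVbar_{0,L}(\rho) \otimes \cL_k$, so the graded module $M_L(\rho)$ is completely determined by the rank-$n$ bundle $\cE \df \cVbar_{0,L}(\rho)$ together with the line bundle $\cL_{1/2}$. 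First I would record the facts about $\cMbar_{1/2}$ that the argument needs: it is a smooth proper orbicurve with coarse space $\PP^1$, carrying three stacky points --- the cusp $\infty$ and the elliptic points $[S]$, $[R]$ of orders $4$, $8$, $12$ --- so that every vector bundle on it is a direct sum of line bundles (the orbifold Birkhoff--Grothendieck splitting), $\Pic(\cMbar_{1/2})$ is generated, up to the torsion carried by the stacky points, by $\cL_{1/2}$, and the section ring $\bigoplus_k H^0(\cMbar_{1/2},\cL_k)$ equals $M(1)=\CC[E_4,E_6]$.

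For (i) and (ii): write $\cE = \bigoplus_{j=1}^n \cN_j$ with each $\cN_j$ a line bundle, and normalise $\cN_j \cong \cL_{-k_j}\otimes\cT_j$ with $\cT_j$ a fractional line bundle supported at the stacky points and $k_j\in\tfrac12\ZZ$ chosen to be the least weight in which $\cN_j\otimes\cL_\bullet$ acquires a nonzero section. Each $\bigoplus_k H^0(\cMbar_{1/2},\cN_j\otimes\cL_k)$ is then a free rank-one $M(1)$-module with generator in weight $k_j$ --- this is the computation of the section ring of a line bundle on a weighted projective line, now carried out over $\PP(8,12)$ --- and summing over $j$ proves (i) with generating weights $k_1\le\cdots\le k_n$. (Equivalently, and more robustly: $M_L(\rho)$ is a finitely generated reflexive graded module over the two-dimensional regular graded ring $M(1)$, hence free by graded Auslander--Buchsbaum, with generic rank $\dim_\CC V = n$.) For (ii), $\sum_j k_j$ is read off from $\deg\cE$: since $\deg\cN_j = -2k_j\deg\cL_{1/2} + \deg\cT_j$ and the $\deg\cT_j$ are pinned down by the local monodromy at the stacky points, the Chern-class bookkeeping of \cite{CandeloriFranc} --- now with $\deg\cL_{1/2}$ on $\PP(8,12)$ in place of $\deg\cL_1$ on $\PP(4,6)$, and with the cusp contribution to $\deg\cE$ controlled by $\Tr(L)$ --- yields $\sum_j k_j = 12\,\Tr(L)$; the constant $12$ is intrinsic to the modular orbicurve, and both $\deg\cL_{1/2}$ and the cusp term scale compatibly under passage to half-integral weight.

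For (iii), assume $\rho$ unitarizable and $L$ standard. The lower bound $k_j\ge 0$ is the assertion that a holomorphic $\rho$-valued form of negative weight vanishes: this is the usual Petersson norm / maximum-principle argument (equivalently, the canonical extension of a unitary local system is a bundle with no sections in negative twists). For the upper bound, apply Serre duality on $\cMbar_{1/2}$, which identifies $H^0(\cVbar_{k}(\rho))^\vee$ with $H^1\big(\cSbar_{-k}(\rho^\vee)\otimes\omega_{\cMbar_{1/2}}\big)$; since $\rho$ is unitarizable, $\rho^\vee\cong\overline{\rho}$ is again unitarizable with the conjugate exponent data, and after identifying $\cSbar_{-k}(\rho^\vee)\otimes\omega_{\cMbar_{1/2}}$ with a canonical-type extension $\cVbar_{k',L'}(\rho^\vee)$ the lower bound applied to $\rho^\vee$ bounds $k$ from above. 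Computing $\deg\omega_{\cMbar_{1/2}}$ from the genus and the three stacky points of orders $4$, $8$, $12$ converts this into $k_j\le 23/2$.

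The step I expect to be the main obstacle is this last one: the orbifold Serre-duality bookkeeping for $\PP(8,12)$. One must correctly match $\cSbar_{-k}(\rho^\vee)\otimes\omega_{\cMbar_{1/2}}$ with a $\cVbar_{k',L'}(\rho^\vee)$, keep straight how the standard and cuspidal exponent conventions interchange under duality, and compute $\deg\omega_{\cMbar_{1/2}}$ together with the fractional twists at the order-$4$, $8$, $12$ points sharply enough to land exactly on $23/2$ rather than on a weaker bound. The remaining steps are a faithful transcription of \cite{CandeloriFranc} with $\PP(4,6)$ replaced by $\PP(8,12)$.
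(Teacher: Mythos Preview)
Your proposal is correct and is precisely the approach the paper takes: the paper gives no independent proof of this theorem, stating only that it follows ``by the same arguments as in \cite{CandeloriFranc}'' after replacing $\PP(4,6)$ by $\PP(8,12)$, which is exactly the transplant you outline (orbifold splitting for (i), degree/Chern-class bookkeeping for (ii), unitarity plus Serre duality on $\cMbar_{1/2}$ for (iii)). One small cosmetic point: on $\PP(8,12)$ the order-$4$ stabilizer is the \emph{generic} one (a $\mu_4$-gerbe), so the cusp is not an extra stacky point beyond the two of orders $8$ and $12$; this does not affect your argument, but you should phrase the orbifold Riemann--Roch/Serre-duality computation accordingly.
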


\begin{dfn}
\label{dfn:generatingWeights}
Given $\rho$ as in Theorem \ref{thm:FreeModuleThm}, we call the $k_j$ the {\em generating weights} of $M_L(\rho)$. When $L$ is the standard choice of exponents, we also say that the $k_j$ are the {\em generating weights for $\rho$}. 
\end{dfn}

The problem of computing the generating weights is equivalent to that of computing $\dim M_{k,L}(\rho)$ for all $k$, via the identity of formal power series 
$$
\sum_{k\in \frac{1}{2}\ZZ}\dim M_{k,L}(\rho) t^k = \frac{t^{k_1}+ \ldots + t^{k_{n}}}{(1 - t^4)(1-t^6)} \in \ZZ[\![ t^{1/2} ]\!].
$$
A common way to compute $\dim M_{k,L}(\rho)$ is to first compute the Euler characteristic 
$$
\chi( \cMbar_{1/2},\cVbar_{k,L}(\rho))\df \dim H^0(\cMbar_{1/2},\cVbar_{k,L}(\rho))- \dim H^1(\cMbar_{1/2},\cVbar_{k,L}(\rho))
$$
using the Riemann-Roch Theorem, and then study the problem of vanishing of the $H^1$-term using a variety of techniques. Using the identification $\eqref{eqn:isomorphismWithP(8,12)}$ of $\cMbar_{1/2}$ with $\PP(8,12)$, the Riemann-Roch Theorem for Deligne-Mumford stacks (e.g. \cite{Edidin}) gives the following formula: 

\begin{prop}
\label{prop:generalEulerCharacteristicFormula}
Let $\rho: \Mp_2(\ZZ) \rightarrow \GL(V)$ be a finite dimensional  complex representation, and let $L\in \mathrm{End}(V)$ be a choice of exponents. For $i=\sqrt{-1}$ and $j=0,1,2,3$, let $L^{i^j}$ be the restriction of $L$ to the eigenspace with eigenvalue $i^j$ of the linear map $\rho(Z)$. Then, for all $k\in \frac{1}{2}\ZZ$, 
\begin{equation*}
\begin{aligned}
&\chi( \cMbar_{1/2},\cVbar_{k,L}(\rho)) = \dim(\rho)\frac{(5+k)}{48} - \frac{\Tr(L)}{4} + \\
&+ (-1)^{2k}\left(\frac{5+k}{48}\Tr(\rho(I,-1))  - \frac{1}{4}\Tr(L^{+1}) + \frac{1}{4}\Tr(L^{-1})\right) + \\
& + i^{2k}\left(\frac{5+k}{48}\Tr(\rho(Z)) - \frac{1}{4}\sum_{j=0}^3 i^j \Tr(L^{i^j})\right) + i^{-2k}\left(\frac{5+k}{48}\Tr(\rho(Z^{-1})) - \frac{1}{4}\sum_{j=0}^3 i^{-j} \Tr(L^{i^{-j}})\right)+ \\
&+ \sum_{j=1,3,5,7} \frac{\sqrt{i}^{2k j}}{16}\Tr(\rho(S)^j) + \sum_{j=1,2,4,5,7,8,10,11} \frac{\zeta_{12}^{2kj}}{12(1-\zeta_3^j)}\Tr(\rho(R)^j).
\end{aligned}
\end{equation*}

\end{prop}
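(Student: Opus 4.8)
The plan is to apply the Riemann--Roch theorem for Deligne--Mumford stacks, in the form of \cite{Edidin}, to the orbicurve $\cMbar_{1/2}\cong\PP(8,12)$ of \eqref{eqn:isomorphismWithP(8,12)} and the vector bundle $\cVbar_{k,L}(\rho)=\cV(\rho)\otimes\cL_k$, mimicking the integral-weight computation of \cite{CandeloriFranc}. The first task is to enumerate the components of the inertia stack of $\cMbar_{1/2}$. As $\cMbar_{1/2}$ has generic automorphism group $\langle Z\rangle\cong\ZZ/4\ZZ$, an extra stacky point $P_S$ with automorphism group $\langle S\rangle\cong\ZZ/8\ZZ$, an extra stacky point $P_R$ with automorphism group $\langle R\rangle\cong\ZZ/12\ZZ$, and a cusp $\infty$ carrying only the generic group, those components are: the untwisted component; three ``generic'' twisted sectors supported on all of $\cMbar_{1/2}$, indexed by $Z,Z^2,Z^3$; four sectors supported at the point $P_S$, indexed by the powers $S^j$ with $j$ odd; and eight sectors supported at the point $P_R$, indexed by the powers $R^j$ with $j\not\equiv 0\pmod 3$, that is $j\in\{1,2,4,5,7,8,10,11\}$. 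These sixteen sectors correspond to the six blocks of terms in the asserted formula, the blocks carrying the prefactors $(-1)^{2k}$, $i^{2k}$ and $i^{-2k}$ being the contributions of $Z^2$, $Z$ and $Z^3$ respectively.

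For each sector one needs the trace of the relevant stabilizer element $g$ on the fibre of $\cVbar_{k,L}(\rho)$. Since the metaplectic cocycle is $(\gamma,\phi)\mapsto\phi$, the scalar by which $g$ acts on the fibre of $\cL_k=\cL_{1/2}^{\otimes 2k}$ at a fixed point is the $2k$-th power of the value of $\phi$ there; evaluating at the fixed points $i$ and $\zeta_3$ of $S$ and $R$ in $\frakh$ shows that $Z$ acts on $\cL_k$ by $i^{2k}$, that $S^j$ acts by $\sqrt{i}^{2kj}$ with $\sqrt{i}=e^{\pi i/4}$, and that $R^j$ acts by $\zeta_{12}^{2kj}$. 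Multiplying by $\Tr\rho(Z^j)$, $\Tr\rho(S)^j$ and $\Tr\rho(R)^j$ gives the fibrewise traces on $\cVbar_{k,L}(\rho)$; these produce the various roots of unity appearing in the formula.

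The remaining local input is the factor $\det(1-g\mid N)$ coming from the action of $g$ on the normal bundle of its fixed locus. The $P_S$- and $P_R$-sectors have an isolated fixed point, so $N$ is the tangent line: $S^j$ ($j$ odd) acts there by $-1$, whence $\det(1-g\mid N)=2$, and $R^j$ acts by a power of $\zeta_3$, whence $\det(1-g\mid N)=1-\zeta_3^j$; together with the automorphism-group normalisations $\tfrac18$ and $\tfrac1{12}$ these give the coefficients $\tfrac1{16}$ and $\tfrac1{12(1-\zeta_3^j)}$. The untwisted sector and the three generic sectors are instead supported on the whole curve, where the fractional Riemann--Roch computation on $\PP(8,12)$ (the orders $8$ and $12$ of the stacky points contributing the term $\tfrac18+\tfrac1{12}$) produces the terms with coefficient $\tfrac{5+k}{48}$; the $L$-dependent pieces $-\tfrac14\Tr L$, $-\tfrac14\Tr L^{+1}+\tfrac14\Tr L^{-1}$ and $-\tfrac14\sum_j i^{\pm j}\Tr L^{i^j}$ arise from the local form of the extension $\cVbar_{k,L}(\rho)$ near the cusp, which is governed by $L$ and hence --- the extension being equivariant, which forces $[\rho(Z),L]=0$ --- by the decomposition $V=\bigoplus_j V^{i^j}$, $L=\bigoplus_j L^{i^j}$ of $V$ and $L$ into eigenspaces of the central element $\rho(Z)$. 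Adding the untwisted term $\dim(\rho)\tfrac{5+k}{48}-\tfrac{\Tr L}{4}$ to the contributions of the fifteen nontrivial sectors yields the stated formula.

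I expect the only genuinely delicate point to be the cocycle bookkeeping: one must track the branch of $\phi$ defining $S^j$ and $R^j$ so that $\sqrt{i}^{2kj}$ and $\zeta_{12}^{2kj}$ come out with the correct arguments --- here it matters that $2k\in\ZZ$, so that these are honest roots of unity --- and one must check that the chosen extension is a genuine equivariant orbifold bundle near $\infty$, which is what forces $[\rho(Z),L]=0$ and pins down the weights $i^{\pm j}$ on the summands $\Tr L^{i^j}$ (and the precise signs in the $(-1)^{2k}$-block). Everything else is the same bookkeeping as in \cite{CandeloriFranc}: the passage from $\PP(4,6)$ to $\PP(8,12)$ merely replaces the generic group $\mu_2$, which there contributes a single $(-1)^k$-term, by $\mu_4$, which here contributes the $(-1)^{2k}$- and $i^{\pm 2k}$-terms, and the stacky points of orders $4$ and $6$ by those of orders $8$ and $12$. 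As a check one can evaluate the formula for the trivial representation, where it collapses to the Riemann--Roch value of $\chi(\cMbar_{1/2},\cL_k)$ and in particular gives $\chi(\cMbar_{1/2},\cL_0)=1$.
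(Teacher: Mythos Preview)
Your approach is exactly the paper's: both apply the Riemann--Roch theorem for Deligne--Mumford stacks from \cite{Edidin} to $\cMbar_{1/2}\cong\PP(8,12)$, following the template of \cite{CandeloriFranc}. The paper's own proof is a one-line citation, whereas you have correctly worked out the inertia-stack bookkeeping (the sixteen sectors, the cocycle values $\sqrt{i}^{2kj}$ and $\zeta_{12}^{2kj}$, the normal-bundle factors $2$ and $1-\zeta_3^j$, and the cuspidal $L$-contribution) that the paper leaves implicit.
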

\begin{proof}
This is simply an application of Riemann-Roch in our setting. See \cite{CandeloriFranc} and \cite{Edidin} for more details on these sorts of computations.
\end{proof}

\begin{rmk}
Similar formulas for $\chi( \cMbar_{1/2},\cVbar_{k,L}(\rho))$, with $L$ a standard (or cuspidal) choice of exponents have already been obtained (e.g. \cite{Borcherds:Lorentzian}, \S 7 or \cite{SkoruppaCritical}, Theorem 6) by using Selberg's trace formula. Our approach using Riemann-Roch for weighted projective lines seems new. One advantage of our formula is that it is expressed entirely in terms of traces, and thus it can be easily computed from character tables. 
\end{rmk}

In general, it can be shown using Kodaira-type vanishing theorems and Serre duality that the $H^1$-term is zero for all but finitely many $k$ (see \cite{FrancMason:Structure} for the integral weight case, but the same arguments apply in our case. Note also that the arguments of \cite{FrancMason:Structure} yield effective bounds on the generating weights). For the remaining \emph{critical weights}, for which $H^1$ does not necessarily vanish, there is no general method to compute $\dim M_{k,L}(\rho)$, and thus there is no general method to compute the generating weights of $M_L(\rho)$. For example, if $\rho$ is unitarizable and $L$ is the standard choice of exponents, the critical weights are $k=1/2,1,3/2$. The weights 1/2 and 3/2 require special arguments, as we will illustrate below (following \cite{SkoruppaCritical}) for a special class of unitary representations. However, no general method to compute $\dim M_1(\rho)$ is known. For general possibly nonunitary $\rho$ and $L$, the number of critical weights increases linearly with $\dim \rho$ (see \cite{FrancMason:Structure} for integral weights), and  there are no general methods to compute $\dim M_k(\rho)$ in these expanded critical ranges.

\section{Finite Quadratic Modules and the Weil representation}
\label{s:quadraticmodules}

We now recall how to construct representations of $\Mp_2(\ZZ)$ from quadratic forms, and discuss the problem of finding generating weights for the corresponding modules of vector-valued modular forms. 

\begin{dfn}
A {\em finite quadratic module} is a pair $(D,q)$ consisting of a finite abelian group $D$ together with a quadratic form $q:D\rightarrow \QQ/\ZZ$, whose associated bilinear form we denote by
$
b(x,y) \df q(x+y) - q(x) - q(y).
$
The {\em level} of $(D,q)$ is the smallest integer $N$ such that $Nq(x) \in \ZZ$ for all $x\in D$.   We will sometimes abuse notation and use $D$ to stand for $(D,q)$ when there is no chance of confusion.
\end{dfn}

There is an natural notion of orthogonal sum $(D_1,q_1) \oplus (D_2,q_2)$ of quadratic modules. In fact, any finite quadratic module can be decomposed into a direct sum of the following indecomposable components (see \cite{Stromberg}, \S 2)
\begin{align*}
A_{p^k}^t &\df \left( \ZZ/p^k\ZZ,\; \frac{t\,x^2}{p^k} \right), \quad p>2 \text{ prime}, \quad (t,p)=1, \\
A_{2^k}^t &\df \left( \ZZ/2^k\ZZ,\; \frac{t\,x^2}{2^{k+1}} \right), \quad \quad (t,2)=1, \\
B_{2^k} &\df \left( \ZZ/2^k\ZZ\oplus \ZZ/2^k\ZZ ,\frac{x^2 + 2xy + y^2}{2^{k}} \right), \\
C_{2^k} &\df \left( \ZZ/2^k\ZZ\oplus \ZZ/2^k\ZZ ,\frac{xy}{2^{k}} \right). \\
\end{align*}
In the following, we will only be concerned with the first two types of finite quadratic modules, and we will omit the superscript $t$ when $t=1$.

For any $a\in \ZZ$, define 
$$
\Omega_D(a)\df \frac{1}{\sqrt{|D|}} \sum_{x\in D} e(a\,q(x)).
$$
By the standard theory of quadratic Gauss sums, $\Omega_D(1)$ is an eighth root of unity. Define the {\em signature} of $D$ to be the integer mod 8 defined by
$$
\Omega_D(1) = \sqrt{i}^{\rm{sig(D)}}.
$$

For $(D,q)$ a finite quadratic module, let $\CC[D]$ be the $\CC$-vector space of functions $f:D\rightarrow \CC$. This space has a canonical basis $\{\delta_x\}_{x\in D}$ of delta functions, where $\delta_{x}(y) = \delta_{xy}$ (the Kronecker delta). The {\em Weil Representation} 
$$
\rho_D: \Mp_2(\ZZ) \longrightarrow \GL(\CC[D])
$$
is defined with respect to this basis by 
\begin{align*}
\rho_D(T)(\delta_{x}) &= e(-q(x))\delta_{x}, \\
\rho_D(S)(\delta_{x}) &= \frac{\sqrt{i}^{\rm{sig(D)}}}{\sqrt{|D|}}\sum_{y\in D} e(b(x,y))\delta_{y}.
\end{align*}

For any half-integer $k\in \frac{1}{2}\ZZ$, consider the vector bundle
$$
\cW_k(D)\df \cVbar_k(\rho_D)$$
over the compactified metaplectic orbifold $\cMbar_{1/2}$. As explained in Section \ref{section:vvaluedModularForms}, the holomorphic global sections of this vector bundle are vector-valued modular forms taking values in the Weil representation, which have been studied extensively in the context of lattices and theta lifting (e.g. \cite{Borcherds},  \cite{Borcherds:Lorentzian}). Moreover, when $D = A_m = (\ZZ/m\ZZ, x^2/2m)$ for $m\in 2\ZZ_{>0}$, the spaces $M_k(\rho_D)$ are canonically isomorphic to those of Jacobi forms of weight $k + 1/2$ and index $m/2$ (\cite{EichlerZagier}). 

\begin{rmk}
Conventions vary on whether $\rho_D$ or $\rho_D^*$ should be called {\em the} Weil representation attached to $(D,q)$. Following our conventions, the vector of theta constants of a positive-definite lattice belongs to $M_{1/2}(\rho_D^*)$. Authors such as Borcherds (e.g. \cite{Borcherds}, \cite{Borcherds:Lorentzian}) define our $\rho_D^*$ to be {\em the} Weil representation. 
\end{rmk}

Note first that, if $f \in M_k(\rho_D)$, then
$$
f(\tau) = (-1)^{2k+\rm{sig}(D)}f(\tau),
$$
as follows from considering the action of $(I,-1)\in \Mp_2(\ZZ)$. Therefore 
\begin{equation}
\label{eqn:parityArgument}
2k+\mathrm{sig}(D) \text{ is odd } \Longrightarrow M_k(\rho_D)=0.
\end{equation}
In other words, there are no modular forms of integral weight when the signature is odd, and there are no modular forms of half-integral weight when the signature is even.

We may now specialize Proposition \ref{prop:generalEulerCharacteristicFormula} to the representations $\rho_D$, in which case the formula simplifies significantly.

\begin{prop}
\label{prop:WeilRepEulerCharacteristic}
If $\mathrm{sig}(D) + 2k$ is even, then 
\begin{equation*}
\begin{aligned}
&\chi( \cMbar_{1/2},\cW_k(D)) = |D|\frac{(5+k)}{24} - \frac{\Tr(L)}{2}  +\\
& + \Re\left[i^{2k+\mathrm{sig}(D)}\left(\frac{5+k}{24}|D[2]| - \frac{1}{2}\Tr(L|_{\CC[D[2]]})\right)\right]  +\\
&+ \frac{1}{4}\Re\left[\sqrt{i}^{2k+\rm{sig}(D)}\,\Omega_D(2)\right]+ \frac{1}{3}\Re\left[ \frac{\zeta_{12}^{2k}}{(1-\zeta_3)}i^{\rm{sig}(D)}+\frac{\zeta_{6}^{2k}}{(1-\zeta^2_3)}\sqrt{i}^{\rm{sig(D)}}\Omega_D(3) \right],
\end{aligned}
\end{equation*}
where $D[2]$ denotes the $2$-torsion of the finite abelian group $D$.
\end{prop}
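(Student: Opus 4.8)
The plan is to specialize the general Euler characteristic formula of Proposition \ref{prop:generalEulerCharacteristicFormula} to the case $\rho = \rho_D$ and simplify. The main input is that for Weil representations the relevant traces are not just abstract numbers from a character table but can be expressed via the Gauss-type sums $\Omega_D(a)$ and the $2$-torsion subgroup $D[2]$. First I would record the basic trace evaluations: since $\rho_D(T)\delta_x = e(-q(x))\delta_x$ is diagonal, $\rho(T) = e^{2\pi i L}$ forces the standard choice of exponents to be the diagonal matrix with entries $\{-q(x)\}$, so $\Tr(L) = \sum_{x \in D}\{-q(x)\}$ (this is the quantity appearing later, but for the present proof I only need that $L$ is supported on the natural basis and commutes with the permutation structure). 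Next, $\rho_D(Z) = \rho_D(S)^2$ acts (up to the eighth root of unity) as $\delta_x \mapsto \pm\delta_{-x}$; concretely one computes $\rho_D(Z)\delta_x = i^{\mathrm{sig}(D)}\delta_{-x}$, so $\rho_D(Z)$ has order dividing $4$, its eigenvalues are $i^{\mathrm{sig}(D)}$ on the $D[2]$-part and come in conjugate pairs elsewhere, and $\Tr(\rho_D(Z)) = i^{\mathrm{sig}(D)}|D[2]|$. Similarly $\Tr(\rho_D(Z^{-1})) = i^{-\mathrm{sig}(D)}|D[2]|$, which is the complex conjugate.

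The second block of the general formula involves $\Tr(\rho(I,-1))$ and the eigenspace decomposition of $L$ under $\rho(Z)$. I would use $(I,-1) = Z^2$ and the computation above to get $\rho_D(I,-1)\delta_x = i^{2\mathrm{sig}(D)}\delta_x = (-1)^{\mathrm{sig}(D)}\delta_x$, so $\Tr(\rho_D(I,-1)) = (-1)^{\mathrm{sig}(D)}|D|$ and more importantly $\rho_D(I,-1)$ is scalar; combined with the parity observation \eqref{eqn:parityArgument} this is exactly what collapses the first two lines of Proposition \ref{prop:generalEulerCharacteristicFormula} (when $\mathrm{sig}(D)+2k$ is odd everything vanishes; when it is even the $(-1)^{2k}$ and $i^{2k}$ terms merge). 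For the $i^{\pm 2k}$ terms I need $\sum_j i^{\pm j}\Tr(L^{i^j})$: the eigenspaces of $\rho_D(Z)$ with eigenvalues $i^{\mathrm{sig}(D)}$ and $i^{\mathrm{sig}(D)+2}$ are spanned by symmetrizations/antisymmetrizations $\delta_x \pm \delta_{-x}$, and since $L$ is diagonal with $L\delta_x = \{-q(x)\}\delta_x = \{-q(-x)\}\delta_{-x}$-compatible (as $q(x)=q(-x)$), $L$ preserves each $\pm$ eigenspace and the pieces of $\Tr(L)$ coming from $x \notin D[2]$ cancel in the alternating sum, leaving only the $D[2]$ contribution $\Tr(L|_{\CC[D[2]]})$. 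Packaging the $i^{2k}$ and $i^{-2k}$ terms together as $2\Re[\,\cdot\,]$ and absorbing the old $(-1)^{2k}$ line gives the single $\Re[i^{2k+\mathrm{sig}(D)}(\cdots)]$ term in Proposition \ref{prop:WeilRepEulerCharacteristic}; the overall factor-of-$2$ discrepancy (dimension term $/48 \to /24$, $\Tr(L)/4 \to \Tr(L)/2$, etc.) comes from the fact that half the terms have been folded into the real parts.

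For the $S$-sum, I would use that $\rho_D(S)^j$ has trace a Gauss sum: a direct computation from the formula for $\rho_D(S)$ gives $\Tr(\rho_D(S)) = \sqrt{i}^{\mathrm{sig}(D)}\,\overline{\Omega_D(1)}\cdot(\text{normalization})$, and more usefully $\Tr(\rho_D(S)^2) = \Tr(\rho_D(Z))$ already computed, while the odd powers $S, S^3, S^5, S^7$ and even powers pair up under conjugation; the known identity $\rho_D(S)^j$ relating to $\Omega_D$ (via $S^2 = Z$, $S^4 = (I,-1)$ scalar) reduces the eight-term sum $\sum_{j=1,3,5,7}\frac{\sqrt{i}^{2kj}}{16}\Tr(\rho_D(S)^j)$ to the single real part $\frac14\Re[\sqrt{i}^{2k+\mathrm{sig}(D)}\Omega_D(2)]$ after using $S^4$ scalar to halve the range and conjugation-symmetry to convert to $\Re$. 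The $R$-sum is handled the same way: $R^3 = Z$ is (up to root of unity) the involution $x \mapsto -x$, $R^6 = (I,-1)$ is scalar, so the twelve-term sum over $j \in \{1,2,4,5,7,8,10,11\}$ collapses — the $j$ and $12-j$ terms being conjugate — to contributions from $R, R^2$ only, whose traces are $\sqrt{i}^{\mathrm{sig}(D)}\Omega_D(3)$ and $i^{\mathrm{sig}(D)}$ respectively (since $\rho_D(R) = \rho_D(S)\rho_D(T)$ and $\rho_D(R^2) = \rho_D(STST)$ can be evaluated as twisted Gauss sums, with the $R^2$ trace reducing to $\Tr\rho_D(Z)/|D[2]|$-type data giving just $i^{\mathrm{sig}(D)}$). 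Assembling these gives the last line $\frac13\Re[\tfrac{\zeta_{12}^{2k}}{1-\zeta_3}i^{\mathrm{sig}(D)} + \tfrac{\zeta_6^{2k}}{1-\zeta_3^2}\sqrt{i}^{\mathrm{sig}(D)}\Omega_D(3)]$.

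\textbf{Main obstacle.} The routine parts are the diagonal computations for $T$, $Z$, and $(I,-1)$. The delicate bookkeeping — and the step I expect to cost the most care — is the collapse of the $S$-sum and $R$-sum: one must correctly evaluate $\Tr(\rho_D(S)^j)$ and $\Tr(\rho_D(R)^j)$ for all needed $j$ as Gauss sums $\Omega_D(a)$ for the right $a$, keep precise track of the eighth-roots-of-unity $\sqrt{i}^{\mathrm{sig}(D)}$ and the cyclotomic coefficients $\zeta_{12}^{2kj}/(1-\zeta_3^j)$, and verify that the conjugate-paired terms combine into exactly the stated real parts with the stated rational prefactors $\tfrac14$ and $\tfrac13$. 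In particular one should double-check that no $\Omega_D(a)$ for $a \neq 1,2,3$ survives — this relies on $S$ having order $8$ and $R$ order $12$ modulo the scalar $(I,-1)$, so the only genuinely new Gauss sums are $\Omega_D(2)$ (from $S^2$-twisted terms, though these reduce further) and $\Omega_D(3)$ (from $R$), matching the classical fact that the Euler characteristic of a Weil-representation bundle depends on $D$ only through $|D|$, $|D[2]|$, $\mathrm{sig}(D)$, $\Omega_D(2)$, $\Omega_D(3)$, and $\Tr(L)$.
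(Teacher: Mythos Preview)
Your overall strategy is exactly the paper's: specialize Proposition~\ref{prop:generalEulerCharacteristicFormula} to $\rho_D$, use that $\rho_D(Z)=i^{\mathrm{sig}(D)}Z_0$ with $Z_0\colon\delta_x\mapsto\delta_{-x}$ to handle the $Z$-block and the $L^{i^j}$ decomposition, and collapse the $S$- and $R$-sums by pairing conjugate terms. The eigenspace analysis for $L$ (symmetrized/antisymmetrized $\delta_x\pm\delta_{-x}$, with only $D[2]$ surviving the alternating trace) is precisely what the paper does.

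However, you have swapped the two $R$-traces. A direct computation gives
\[
\Tr\rho_D(R)=\frac{\sqrt{i}^{\mathrm{sig}(D)}}{\sqrt{|D|}}\sum_{x\in D}e\bigl(b(x,x)-q(x)\bigr)=\sqrt{i}^{\mathrm{sig}(D)}\Omega_D(1)=i^{\mathrm{sig}(D)},
\]
since $b(x,x)=2q(x)$; it is $\Tr\rho_D(R^2)=\Tr\rho_D(ST^{-1})$ that equals $\sqrt{i}^{\mathrm{sig}(D)}\Omega_D(3)$. Your parenthetical justification (``the $R^2$ trace reducing to $\Tr\rho_D(Z)/|D[2]|$-type data'') is therefore attached to the wrong power. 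If you follow your assignment through, the $\zeta_{12}^{2k}$ and $\zeta_6^{2k}$ coefficients in the final line will be interchanged, which is not the stated formula. Similarly, your description of $\Tr\rho_D(S)$ as ``$\sqrt{i}^{\mathrm{sig}(D)}\,\overline{\Omega_D(1)}\cdot(\text{normalization})$'' is off: it is simply $\sqrt{i}^{\mathrm{sig}(D)}\Omega_D(2)$, and the paper's mechanism for collapsing the four odd powers of $S$ is not ``halving the range via $S^4$ scalar'' (the sum over $j=1,3,5,7$ is already the full range) but rather writing $\rho_D(S)=\Omega_D(1)S_0$ with $S_0$ of order $4$ and using $\Omega_D(1)^5=\pm\Omega_D(1)$ together with the parity hypothesis on $2k+\mathrm{sig}(D)$.
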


\begin{proof}
We apply Proposition \ref{prop:generalEulerCharacteristicFormula} and then simplify. First note that $L = L^{+1}$ if $\rm{sig}(D)$ is even and $L = L^{-1}$ if $\rm{sig}(D)$ is odd. Together with the fact that $2k + \mathrm{sig}(D)$ is even, this justifies the first line of the formula. For the second line, we have $\rho_D(Z) = \rho_D(S)^2 = i^{\rm{sig(D)}}Z_0$, where $Z_0$ is the matrix of order 2 that sends $\delta_{\nu} \mapsto \delta_{-\nu}$. We thus have
$$
\Tr(\rho_D(Z)) = i^{\rm{sig(D)}}|\{ \nu \in D: \nu = -\nu\}| = i^{\rm{sig(D)}}\abs{D[2]}. 
$$
Now $Z_0$ only has eigenvalues of $\pm 1$, with eigenspaces
$$
V^+ = \mathrm{span}\{\delta_{\nu} + \delta_{-\nu}\},\quad V^- = \mathrm{span}\{\delta_{\nu} - \delta_{-\nu}\},
$$
so the difference of the traces of $L$ restricted to the $i^j$ eigenspaces is as stated. For the third line of the equation, note that $\rho_D(S) = \Omega_D(1)\,\,S_0$, where $S_0$ is a matrix of order 4 and $\Omega \df \Omega_D(1)= \sqrt{i}^{\rm{sig(D)}}$ is an $8^{\rm th}$ root of unity. Therefore,
$$
\rho_D(S)^3 = \Omega^3S_0^{-1}, \quad \rho_D(S)^5 = \Omega^5\,S_0, \quad \rho_D(S)^7 = \Omega^7\,S_0^{-1},
$$
and 
\begin{align*}
\sum_{j=1,3,5,7} \sqrt{i}^{2k j}\Tr(\rho_D(S)^j) &= \sqrt{i}^{2k}\Omega\Tr(S_0) + \sqrt{i}^{6k}\Omega^3\Tr(S_0^{-1}) \\ &\ \ \ \ + \sqrt{i}^{10k}\Omega^5\Tr(S_0) + \sqrt{i}^{14k}\Omega^7\Tr(S_0^{-1})\\
&= (\sqrt{i}^{2k}\Omega +\sqrt{i}^{10k}\Omega^5) \Tr(S_0) + \overline{(\sqrt{i}^{2k}\Omega +\sqrt{i}^{10k}\Omega^5) \Tr(S_0)} \\
&= 2\Re[\sqrt{i}^{2k}(\Omega +(-1)^{2k}\Omega^5) \Tr(S_0)].
\end{align*}
If $\rm{sig}(D)$ and $2k$ are both odd, then $\Omega$ is a primitive $8^{\rm th}$ root of unity, so that $\Omega^5 = -\Omega$ and the expression simplifies as 
\begin{align*}
2\Re[\sqrt{i}^{2k}(1 +(-1)^{2k+1}) \Omega\Tr(S_0)] &= 4\Re[\sqrt{i}^{2k}\,\frac{\sqrt{i}^{\rm{sig(D)}}}{\sqrt{D}}\sum_{x\in D} e(2q(x))] \\&= 4\Re\left[\sqrt{i}^{2k+\rm{sig}(D)}\,\Omega_D(2)\right].
\end{align*}
If $\mathrm{sig}(D)$ and $2k$ are both even, then $\Omega^5 = \Omega$, and we get the same expression. 
Suppose now again that $\mathrm{sig}(D)$ and $2k$ are both odd. Then $\rho_D(R) = \zeta_{12}\,\,R_0$, where $R_0$ is a matrix of order 6. Consequently, we have 
\begin{align*}
\sum_{j=1,2,4,5,7,8,10,11} \frac{\zeta_{12}^{2kj}}{12(1-\zeta_3^j)}\Tr(\rho_D(R)^j) = 2\Re[\sum_{j=1,2,4,5} \frac{\zeta_{12}^{2kj}}{12(1-\zeta_3^j)}\Tr(\zeta_{12}^jR_0^j) ],
\end{align*}
where we have used the fact that $\Tr(R_0^{-1}) = \Tr(R_0^{-1,T}) = \overline{\Tr(R_0)}$. Moreover, since $\zeta_{12}^5 = -\zeta_{12}^{-1}$ and $2k$ is odd, we have
$$
\frac{\zeta_{12}^{2k}}{12(1-\zeta_3)}\Tr(\zeta_{12}R_0) + \frac{\zeta_{12}^{10k}}{12(1-\zeta^2_3)}\Tr(\zeta^5_{12}R^5_0) = 2\Re[ \frac{\zeta_{12}^{2k}}{12(1-\zeta_3)}\Tr(\zeta_{12}R_0)],
$$
and 
$$
\frac{\zeta_{6}^{2k}}{12(1-\zeta^2_3)}\Tr(\zeta_{6}R_0) + \frac{\zeta_{6}^{4k}}{12(1-\zeta_3)}\Tr(\zeta^2_{6}R^4_0) = 2\Re[ \frac{\zeta_{6}^{2k}}{12(1-\zeta^2_3)}\Tr(\zeta_{6}R^2_0)]
$$
because $\zeta_6^2 = -\zeta_6^{-1}$. It is clear from the definitions that
$$
\Tr[\rho_D(R)] = \Omega^2 = i^{\rm{sig(D)}},
$$
while a short computation shows that
$$
\Tr[\rho_D(R^2)] = \Tr[\rho_D(ST^{-1})] =  \frac{\sqrt{i}^{\rm{sig(D)}}}{\sqrt{|D|}}\sum_{x\in D} e(3q(x)) = \sqrt{i}^{\rm{sig(D)}}\Omega_D(3).
$$
If $\mathrm{sig}(D)$ and $2k$ are both even, then $\rho_D(R)$ is already of order 6 and the same argument goes through to obtain the same formula. 
\end{proof}

\begin{ex}
Let $p>3$ be a prime and consider the finite quadratic module
$$
D = A_p := (\ZZ/p\ZZ, x^2/p),
$$
which has $|D| = p$. We show how to compute the Euler characteristic of $\cW_k(A_p)$ using Proposition \ref{prop:WeilRepEulerCharacteristic}. By a classical Gauss sum computation,
$$
\Omega_{A_p}(1) = \left\{ \begin{array}{lc}
 1 & \text{ if } p\equiv 1 (4) \\
 i & \text{ if } p\equiv 3 (4)  \\ 
 \end{array} \right.,
$$
so that the signature is even in all cases. Since $(2,p)=1$, we have $A_p[2]= \{0\}$.  Thus, $\Tr(\rho_D(Z)) = i^{\mathrm{sig(D)}}$, and $\CC[A_p[2]] = \delta_0$, so that $\Tr(L|\delta_0) = 0$. Moreover, by standard Gauss sum arguments, 
$$
\Omega_{A_p}(1)\Omega_{A_p}(a) = \Omega_{A_p}^2(1)\left ( \frac{a}{p} \right ) = \left ( \frac{-a}{p} \right ), \quad a=2,3,
$$
which takes care of the terms coming from $\Tr(\rho_{A_p}(S))$ and $\Tr(\rho_{A_p}(R^2))$. It remains to compute the trace of $L$, the standard choice of exponents. To this end, note that 
$$
  \Tr(L) = \sum_{x=1}^{p-1} \left\{-\frac{x^2}{p}\right\}
=2\sum_{\substack{a=1\\ \chi_p(a) = 1}}^{p-1}\frac{p-a}{p} = (p-1) - 2\sum_{\substack{a=1\\ \chi_p(a) = 1}}^{p-1}\frac{a}{p}.
$$
Now by Dirichlet's class number formula, 

\[
\sum_{\substack{a=1\\ \chi_p(a) = 1}}^{p-1} \frac{a}{p}= \begin{cases}
\frac{1}{4}(p-1) & p \equiv 1 \pmod{4},\\
\frac{1}{4}(p-1)- \frac{h_p}{2} & p\equiv 3 \pmod{4},
\end{cases}
\]
with $h_p$ being the class number of $\QQ(\sqrt{-p})$. Therefore 
\[
  \Tr(L)= \begin{cases}
\frac{1}{2}(p-1) & p \equiv 1 \pmod{4},\\
\frac{1}{2}(p-1)+ h_p & p\equiv 3 \pmod{4}.
\end{cases}
\]
Putting everything together, for $p\equiv 1 \pmod{4}$ and $k\in\ZZ$ we have
\begin{equation*}
\begin{aligned}
&\chi( \cMbar_{1/2},\cW_k(A_p)) = \frac{p(5+k)}{24} - \frac{p-1}{4}  +\\
& + \Re\left[i^{2k}\frac{5+k}{24}\right]  
+ \frac{1}{4}\Re\left[\sqrt{i}^{2k}\left(\frac{2}{p}\right)\right]+ 
\frac{1}{3}\Re\left[\frac{\zeta_{12}^{2k}}{(1-\zeta_3)}+\frac{\zeta_{6}^{2k}}{(1-\zeta^2_3)}\left(\frac{3}{p}\right) \right]
\end{aligned}
\end{equation*}
while, for $p\equiv 3 \pmod{4}$ and $k\in\ZZ$, we have
\begin{equation*}
\begin{aligned}
&\chi( \cMbar_{1/2},\cW_k(A_p)) = \frac{p(5+k)}{24} - \frac{p-1}{4} - \frac{h_p}{2}  +\\
& - \Re\left[i^{2k}\frac{5+k}{24}\right]  
- \frac{1}{4}\Re\left[\sqrt{i}^{2k}\left(\frac{2}{p}\right)\right]- 
\frac{1}{3}\Re\left[\frac{\zeta_{12}^{2k}}{(1-\zeta_3)}+\frac{\zeta_{6}^{2k}}{(1-\zeta^2_3)}\left(\frac{3}{p}\right) \right].
\end{aligned}
\end{equation*}
The cases $p=2,3$ can similarly be worked out, by taking special care of the Gauss sums involved and by dividing the class numbers by the appropriate number of roots of unity.  
\end{ex}

Next, we discuss the general problem of computing the generating weights (Definition \ref{dfn:generatingWeights}) for the free module $M(\rho_D)$ of the Weil representation attached to the finite quadratic module $(D,q)$. As explained in Section \ref{section:vvaluedModularForms}, this is equivalent to computing $\dim M_k(\rho_D)$ for all $k\in \frac{1}{2}\ZZ$.

First, since $\rho(D)$ is unitary, we have (Theorem \ref{thm:FreeModuleThm}, (iii))
$$
\dim M_k(\rho_D) = 0,\quad k<0, 
$$
and (e.g. \cite{CandeloriFranc}, \S 6)
$$
\dim M_0(\rho_D) = \dim_{\CC} \mathrm{Inv}(\rho(D)).
$$

Moreover, by Serre duality,
$$
\dim H^1(\cMbar_{1/2},\cW_k(D)) = \dim S_{2-k}(\rho_D^*) \leq \dim M_{2-k}(\rho_D^*).
$$
Thus,
\begin{align*}
\dim M_k(\rho_D) &= \chi(\cMbar_{1/2},\cW_k(D)), \quad k>2,
\end{align*}
since $\rho_D^*$ is also unitary (and thus $M_{2-k}(\rho_D^*) = 0$ if $k>2$). For $k=2$, we have $\dim S_{0}(\rho_D^*) = 0$, and therefore $\dim M_2(\rho_D) = \chi(\cMbar_{1/2},\cW_2(D))$ as well. It follows that $\dim M_k(\rho_D)$ can be readily computed from Proposition \ref{prop:WeilRepEulerCharacteristic} in all weights $k\leq 0$ and $k\geq 2$. It remains then to consider the `critical weights' $k=1/2,1,3/2$. For $k=1/2$, we have the following formula of Skoruppa.

\begin{thm}[\cite{SkoruppaCritical}, Theorem 9]
\label{theorem:CriticalWeights}
Let $D$ be a finite quadratic module of level dividing $4m$. Then 
\begin{align*}
\dim M_{1/2}(\rho_D) &= \dim \bigoplus_{\substack{l\mid m \\ m/l \text{ squarefree}}} \mathrm{Inv}(\rho_{A^{-1}_{2l}\oplus D^{-1}})^{\epsilon\times 1}, \\
\dim S_{1/2}(\rho^*_D)  &= \dim \bigoplus_{\substack{l\mid m \\ m/l \text{ squarefree}}} \mathrm{Inv}(\rho_{A^{-1}_{2l}\oplus D})^{\epsilon\times 1} - \dim \bigoplus_{\substack{l\mid m \\ l/m \text{ squarefree}}} \mathrm{Inv}(\rho_{A^{-1}_{2l}\oplus D})^{\mathrm{O}(A^{-1}_{2l})\times 1}, \\
\end{align*}
where $\mathrm{O}(D)$ is the orthogonal group of the finite quadratic module $(D,q)$, acting naturally on $\CC[D]$, and $\epsilon \in \GL(\CC[D])$ is given by $\delta_{\nu}\mapsto \delta_{-\nu}$. 
\end{thm}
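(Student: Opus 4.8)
The statement is a theorem of Skoruppa, and the plan is to deduce it from the Serre–Stark basis theorem \cite{SerreStark} together with the standard dictionary between vector-valued forms for $\rho_D$ and scalar modular forms on a congruence cover. First I would unravel the vector-valuedness: if $N$ is the level of $(D,q)$ then $\rho_D$ factors through a finite metaplectic quotient, and a holomorphic $\rho_D$-valued form of weight $1/2$ is the same datum as a $D$-indexed tuple of scalar holomorphic weight-$1/2$ forms on the principal congruence cover of level $N$, intertwining the $\Mp_2(\ZZ)$-action against $\rho_D$. By Serre–Stark, every scalar weight-$1/2$ form of level dividing $4m$ is a linear combination of the unary theta series $\theta_{\psi,t}(\tau)=\sum_{n\in\ZZ}\psi(n)e(tn^2\tau)$ with $\psi$ an \emph{even} primitive Dirichlet character; odd characters give weight $3/2$ and contribute nothing. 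Evenness of $\psi$ is exactly the assertion that $\theta_{\psi,t}$ is fixed by $n\mapsto-n$, which on the representation side is the $\epsilon$-invariance appearing in the statement, so this accounts for the decoration $\epsilon\times 1$.

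Next I would organize the theta series by index. For a fixed index $l$, the span of $\{\theta_{\psi,l}\}_\psi$ is, as an $\Mp_2(\ZZ)$-module, a summand of the Weil representation $\rho_{A_{2l}}$ (equivalently, the vector of these theta series is a weight-$1/2$ form valued in $\rho_{A_{2l}}^*=\rho_{A_{2l}^{-1}}$, the positive-definite unary lattice case of the convention recalled in the remark after the definition of $\rho_D$; the characters $\psi$ simultaneously diagonalize the $\mathrm{O}(A_{2l})$-action on $\CC[A_{2l}]$). Hence the number of independent ways to assemble a $\rho_D$-valued weight-$1/2$ form out of index-$l$ theta series is $\dim\Hom\big(\rho_{A_{2l}^{-1}},\rho_D\big)$ restricted to its $\epsilon$-isotypic part, and using $\dim\mathrm{Inv}(W)=\dim\mathrm{Inv}(W^*)$ together with $\rho_{D_1\oplus D_2}=\rho_{D_1}\otimes\rho_{D_2}$ this equals $\dim\mathrm{Inv}\big(\rho_{A_{2l}^{-1}\oplus D^{-1}}\big)^{\epsilon\times 1}$. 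Summing over $l\mid m$ with $m/l$ squarefree — the condition that removes the obvious oldform overcounting, since a form of index-parameter $l$ also has index-parameter $ld^2$ — yields the first formula, and the same analysis applied to $\rho_D^*$ (whose dual is $\rho_D$) produces the first term of the second formula with $D$ in place of $D^{-1}$.

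Finally, for $S_{1/2}(\rho_D^*)$ I would use the fact that a nonzero weight-$1/2$ theta series is never a cusp form: a combination $\sum_\psi c_\psi\theta_{\psi,l}$ is cuspidal iff all its constant terms at all cusps vanish. The constant term of $\theta_{\psi,l}$ at a cusp is a Gauss-sum multiple depending only on the $\mathrm{O}(A_{2l})$-orbit of the cusp, so the "constant-term data" of the index-$l$ block spans precisely the space of $\mathrm{O}(A_{2l})$-invariants, and the cuspidal subspace is the kernel of the resulting evaluation map; its dimension is the $M_{1/2}$-type count minus $\dim\mathrm{Inv}\big(\rho_{A_{2l}^{-1}\oplus D}\big)^{\mathrm{O}(A_{2l})\times 1}$, with the range of $l$ now governed by the dual divisibility condition recorded in the statement. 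The hard part will be this last step: making the identification of the constant-term map with the projection onto $\mathrm{O}(A_{2l})$-invariants precise at \emph{every} cusp (not just $\infty$), and pinning down exactly which $l$ contribute on the cuspidal side, is where the real content lies; once those are in hand, the rest is the linear algebra of Weil representations plus Serre–Stark. (For the purposes of this paper one may of course simply invoke \cite[Theorem 9]{SkoruppaCritical}.)
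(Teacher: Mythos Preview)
The paper does not prove this theorem; it is stated purely as a citation of Skoruppa's result \cite[Theorem~9]{SkoruppaCritical} and then applied as a black box in Section~\ref{s:specialcase}. Your closing parenthetical remark is therefore exactly what the paper does.

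That said, your sketch is a faithful outline of how Skoruppa's argument actually goes: the Serre--Stark basis theorem supplies the raw material, the $\epsilon$-invariance is precisely the evenness condition on the Dirichlet characters, the squarefree restriction on $m/l$ removes the oldform redundancy among unary theta series of different indices, and the passage from $\Hom$-spaces to invariants of direct-sum Weil representations is the standard bookkeeping via $\rho_{D_1\oplus D_2}\cong\rho_{D_1}\otimes\rho_{D_2}$. You also correctly identify the genuinely delicate step: pinning down the constant-term map at \emph{every} cusp and matching it with the projection onto $\mathrm{O}(A_{2l}^{-1})$-invariants. Skoruppa carries this out via explicit transformation formulae for the theta series under the full metaplectic group, and that is where the real work resides. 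Your sketch would need substantial expansion at that point (and some care with the dual/sign conventions when moving between $\rho_D$ and $\rho_D^*$) to stand on its own, but as a roadmap it is accurate---and, for the purposes of this paper, unnecessary.
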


Note that by Serre duality we have
$$
\dim M_{3/2}(\rho_D) = \chi(\cMbar_{1/2},\cW_{3/2}(D)) + \dim S_{1/2}(\rho^*_D),
$$
and thus Theorem \ref{theorem:CriticalWeights} can also be used to compute  $\dim M_{3/2}(\rho_D)$.

Finally, for weight $k=1$, there are no known general formulas for $M_1(\rho_D)$. This is presently the only obstruction to computing the generating weights of all Weil representations. In particular, if $(D,q)$ is of odd signature, then the generating weights can {\em always} be computed, since in this case $M_1(\rho) = 0$ by \eqref{eqn:parityArgument}. In the rest of paper, we analyze the case $D=A_{2p^r}$, where $p>3$ is a prime, but in principle the same methods can be applied to any finite quadratic module of odd signature.

\section{The case $D = A_{2p^r}$}
\label{s:specialcase}

We now compute the generating weights for the Weil representation $\rho_D$, where 
$$
D = A_{2p^r} = \left(\ZZ/2p^r\ZZ, x \mapsto \frac{x^2}{4p^r}\right),\quad p \geq 5\text{ a prime},
$$
and discuss their distribution as $p\to\infty$. First, note that $\Omega_{A_{2p^r}}(1) = \sqrt{i}$, so that $\mathrm{sig}(A_{2p^r}) = 1$ is odd, and we are guaranteed to succeed in computing the generating weights because $M_1(\rho_D)=0$. We then compute $\chi(\cMbar_{1/2}, \cW(A_{2p^r}))$ using Proposition \ref{prop:WeilRepEulerCharacteristic}, and we simplify the terms involving $\Tr(L)$ and the Gauss sums. This will allow us to compute the generating weights efficiently for large $p^r$ where computing the sums in Proposition \ref{prop:WeilRepEulerCharacteristic} na\"ively would be prohibitive. This simplification will also allow us to understand the asymptotic behaviour of the generating weights.

\subsection{Computation of $\Tr(\rho_D(S))$ and $\Tr(\rho_D(R^2))$}
Set $D = A_{2p^r}$ as above. To compute $\Tr(\rho_D(S))$, first separate $D$ into Jordan components,
$$
D \cong A_{2}^{p^r} \oplus A_{p^r}^{4}.
$$
Note that 
$$
\Tr(\rho_{A_{2}^{p^r}}(S)) = ( 1^{p^r} + (-1)^{p^r} )/\sqrt{2} = 0,
$$
since $p$ is odd. But $\rho_D \cong \rho_{A_{2}^{p^r}}\otimes\rho_{A_{p^r}^{4}}$ and therefore $\Tr(\rho_D(S))=0$. 

To compute the trace of $\rho_D(R^2)$, write again
$$
\Tr(\rho_D(R^2)) = \Tr(\rho_{A_{2}^{p^r}}(R^2))\Tr(\rho_{A_{p^r}^4}(R^2))
$$
as above. For each term, we have
\begin{align*}
\Tr(\rho_{A_{p^r}^4}(R^2)) &= \Omega_{A_{p^r}^4}(1)\Omega_{A_{p^r}^4}(3) \\
 &= \left(\frac{4}{p^r}\right)^2\Omega_{A_{p^r}}(1)\Omega_{A_{p^r}}(3) \\
 & = \left(\frac{3}{p^r}\right)\Omega_{A_{p^r}}(1)^2 \\
  & = \left(\frac{-3}{p^r}\right), 
\end{align*}
and
\begin{align*}
\Tr(\rho_{A_{2}^{p^r}}(R^2)) &= \Omega_{A_{2}^{p^r}}(1)\Omega_{A_{2}^{p^r}}(3) \\
&= \left(\frac{2}{p^r}\right)\sqrt{i}^{p^r}\cdot\left(\frac{2}{3p^r}\right)\sqrt{i}^{3p^r} \\
&= - \left(\frac{2}{3}\right) 
\end{align*}
so that
$$
\Tr(\rho_D(R^2)) = - \left(\frac{2}{3}\right)\left(\frac{-3}{p^r}\right) = \left(\frac{p^r}{3}\right),
$$
the last equality coming from quadratic reciprocity. 

\subsection{Computation of $\Tr(L)$}

Next we compute $\Tr(L)$ for $m=2p^r$ for $p$ an odd prime. First we must prove a few lemmas to deal with certain sums involving imprimitive Dirichlet characters.

\begin{lem}
\label{lem:imprim1}
Let $\chi$ be a nontrivial Dirichlet character modulo $m$, and suppose $m \mid n$.  Then
\begin{equation*}
\sum_{a=0}^{n-1} \chi(a) \frac{a}{n} = \sum_{s=0}^{m-1} \chi(s) \frac{s}{m}.
\end{equation*}
\end{lem}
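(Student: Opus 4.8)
The plan is to write $n = dm$ for some positive integer $d$, and then reorganize the sum over $a \in \{0, 1, \dots, n-1\}$ according to the residue class of $a$ modulo $m$. That is, each $a$ in range can be written uniquely as $a = s + tm$ with $0 \le s \le m-1$ and $0 \le t \le d-1$. Since $\chi$ has modulus $m$, we have $\chi(a) = \chi(s)$, so the summand becomes $\chi(s)\frac{s+tm}{n} = \chi(s)\frac{s+tm}{dm}$.

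First I would split the double sum:
\begin{equation*}
\sum_{a=0}^{n-1} \chi(a)\frac{a}{n} = \sum_{s=0}^{m-1}\sum_{t=0}^{d-1} \chi(s)\frac{s+tm}{dm} = \sum_{s=0}^{m-1}\chi(s)\left(\frac{d \cdot s}{dm} \cdot \frac{1}{d} \cdot d + \frac{m}{dm}\sum_{t=0}^{d-1} t\right),
\end{equation*}
which I would rather organize more cleanly as
\begin{equation*}
\sum_{s=0}^{m-1}\chi(s)\left(\frac{s}{m} \cdot \frac{d}{d} + \frac{1}{d}\sum_{t=0}^{d-1}\frac{t}{m}\right) = \sum_{s=0}^{m-1}\chi(s)\frac{s}{m} + \frac{1}{dm}\left(\sum_{t=0}^{d-1} t\right)\sum_{s=0}^{m-1}\chi(s).
\end{equation*}
The first term is exactly the right-hand side of the claimed identity, so it remains to show the second term vanishes.

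The second term is a constant multiple of $\sum_{s=0}^{m-1}\chi(s)$, and this vanishes precisely because $\chi$ is a nontrivial character modulo $m$: the sum of a nontrivial character over a complete set of residues is zero (standard orthogonality). This is the one place the nontriviality hypothesis is used, so it is worth stating explicitly. I do not anticipate any genuine obstacle here; the only mild care needed is bookkeeping in the change of variables $a = s + tm$ and confirming that $\chi(s)$ really is constant on each residue class (true because the modulus of $\chi$ divides $m$, indeed equals $m$). The argument is short and self-contained.
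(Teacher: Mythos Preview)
Your proof is correct and follows essentially the same approach as the paper: write $a = s + tm$, use periodicity of $\chi$, and kill the extra term via $\sum_{s=0}^{m-1}\chi(s)=0$. There is a harmless slip in the constant in front of the vanishing term (it should be $\tfrac{1}{d}\sum_t t$ rather than $\tfrac{1}{dm}\sum_t t$), but since that term is zero anyway the argument goes through unchanged.
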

\begin{proof}
Write $n=dr$. Then
\begin{align*}
\sum_{a=0}^{n-1} \chi(a) \frac{a}{n} &=
\sum_{q=0}^{r-1} \sum_{s=0}^{m-1} \chi(s) \frac{mq+s}{mr} \\
&= \left(\sum_{q=0}^{r-1} \frac{q}{r}\right)\left(\sum_{s=0}^{m-1} \chi(s)\right)  + \frac{1}{r}\left(\sum_{q=0}^{r-1} 1\right) \left(\sum_{s=0}^{m-1} \chi(s) \frac{s}{m}\right) \\
&= \left(\sum_{q=0}^{r-1} \frac{q}{r}\right) \cdot 0  + \frac{1}{r} \cdot r \cdot \left(\sum_{s=0}^{m-1} \chi(s) \frac{s}{m}\right) \\
&= \sum_{s=0}^{m-1} \chi(s) \frac{s}{m}.
\end{align*}
\end{proof}

\begin{lem}\label{lem:imprim2}
Let $\chi$ be a nontrivial Dirichlet character modulo $m$, suppose $m \mid n$, and let $\chi^\ast$ be the associated imprimitive character modulo $n$ (which may differ from $\chi$ by vanishing at integers sharing prime factors with $n$ but not $m$).  Then:
\begin{equation*}
\sum_{a=0}^{n-1} \chi^\ast(a) \frac{a}{n} = \left(\sum_{d\mid\frac{n}{m}} \mu(d)\chi(d)\right)\left(\sum_{s=0}^{m-1} \chi(s) \frac{s}{m}\right),
\end{equation*}
where $\mu$ is the M\"{o}bius function.
\end{lem}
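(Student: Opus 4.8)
The plan is to express $\chi^\ast$ in terms of $\chi$ via Möbius inversion over the prime factors of $\tfrac{n}{m}$, reduce the sum to many copies of the sum treated in Lemma \ref{lem:imprim1}, and collect terms. Concretely, the imprimitive character $\chi^\ast$ modulo $n$ differs from $\chi$ only in that it vanishes at integers divisible by some prime dividing $n$ but not $m$; writing $\tfrac{n}{m}=\prod_i p_i^{e_i}$ and letting $P=\prod_i p_i$ be its radical, one has the pointwise identity $\chi^\ast(a)=\chi(a)\sum_{d\mid \gcd(a,P)}\mu(d)$, since the inner sum is $1$ when $\gcd(a,P)=1$ and $0$ otherwise. (Here $\chi(a)$ is interpreted as $\chi(a\bmod m)$, which is legitimate because $m\mid n$.) This rewriting is the conceptual heart of the argument; everything afterward is bookkeeping.

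Next I would substitute this identity into the left-hand side and interchange the order of summation:
\begin{equation*}
\sum_{a=0}^{n-1}\chi^\ast(a)\frac{a}{n}
=\sum_{d\mid P}\mu(d)\sum_{\substack{a=0\\ d\mid a}}^{n-1}\chi(a)\frac{a}{n}
=\sum_{d\mid P}\mu(d)\chi(d)\sum_{b=0}^{n/d-1}\chi(b)\frac{b}{n/d},
\end{equation*}
where in the last step I set $a=db$ and used multiplicativity, $\chi(a)=\chi(d)\chi(b)$, together with $\tfrac{a}{n}=\tfrac{b}{n/d}$. Since $d\mid P\mid \tfrac{n}{m}$, we have $m\mid \tfrac{n}{d}$, so Lemma \ref{lem:imprim1} applies (with $n$ there replaced by $\tfrac{n}{d}$) and shows each inner sum equals $\sum_{s=0}^{m-1}\chi(s)\tfrac{s}{m}$, which is independent of $d$. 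Factoring it out gives $\left(\sum_{d\mid P}\mu(d)\chi(d)\right)\left(\sum_{s=0}^{m-1}\chi(s)\tfrac{s}{m}\right)$, and since $\mu(d)=0$ unless $d$ is squarefree, the sum over $d\mid P$ is the same as the sum over all $d\mid \tfrac{n}{m}$, giving exactly the claimed formula.

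The only genuine subtlety — and the step I would be most careful with — is the first one: verifying that the stated pointwise relation between $\chi^\ast$ and $\chi$ is correct, i.e.\ that $\chi^\ast$ really is the character that agrees with $\chi\bmod m$ on integers coprime to $n$ and vanishes elsewhere, and that writing $\chi(a)$ for $\chi(a\bmod m)$ causes no inconsistency when we later split $a=db$. Once that is pinned down, the summation interchange and the appeal to Lemma \ref{lem:imprim1} are routine, and no hypothesis beyond $\chi$ nontrivial modulo $m$ and $m\mid n$ is needed (nontriviality is used only inside Lemma \ref{lem:imprim1}, via $\sum_s\chi(s)=0$).
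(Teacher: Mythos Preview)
Your proof is correct and is exactly the argument the paper has in mind: the paper's proof reads in its entirety ``Follows from Lemma \ref{lem:imprim1} and M\"obius inversion,'' and what you have written is a careful unpacking of that sentence. Your treatment of the one genuine subtlety---that the radical $P$ of $n/m$ may contain primes already dividing $m$, but these contribute nothing since $\chi(d)=0$ for such $d$---is handled implicitly by the multiplicativity step and causes no trouble.
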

\begin{proof}
Follows from Lemma \ref{lem:imprim1} and M\"{o}bius inversion.
\end{proof}

For an odd prime $p$, define
\begin{align*}
h_p^\ast &\df -\sum_{a=0}^{p-1} \left(\frac{a}{p}\right)\frac{a}{p},\\
h_p^\prime &\df -\sum_{\substack{a=0\\ a\textrm{ odd}}}^{4p-1} (-1)^\frac{a-1}{2}\left(\frac{a}{p}\right)\frac{a}{4p}.
\end{align*}
Character sums of this form vanish for odd characters, so in particular, $h_p^\ast=0$ when $p \equiv 1 \mod 4$, and $h_p^\prime=0$ when $p \equiv 3 \mod 4$.  On the other hand, the Dirichlet class number formula determines these values in the remaining cases.  We have
\begin{align*}
h_p^\ast &= \left\{\begin{array}{cl}
0 & \mbox{ if } p \equiv 1 \mod 4 \\
h_p & \mbox{ if } p \equiv 3 \mod 4,~ p \neq 3 \\
\frac{1}{3} & \mbox{ if } p = 3 
\end{array}\right., \\
h_p^\prime &= \left\{\begin{array}{cl}
h_p & \mbox{ if } p \equiv 1 \mod 4 \\
0 & \mbox{ if } p \equiv 3 \mod 4
\end{array}\right..
\end{align*}

Most of the work of computing $\Tr(L)$ is contained in the next two propositions.

\begin{prop}\label{prop:sum1}
\begin{equation}\label{eq:sum1}
\sum_{x=0}^{p^r-1} \left\{\frac{x^2}{p^r}\right\} = \frac{1}{2}\left(p^r-p^{\left\lfloor\frac{r}{2}\right\rfloor}\right) - \frac{p^{\left\lfloor\frac{r+1}{2}\right\rfloor}-1}{p-1}h_p^\ast.
\end{equation}
\end{prop}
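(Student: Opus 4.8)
The idea is to stratify $[0,p^{r})\cap\ZZ$ by the exact power of $p$ dividing $x$. Write $x=p^{j}u$ with $p\nmid u$ (the term $x=0$ contributes nothing); then $x^{2}/p^{r}=u^{2}/p^{r-2j}$, so $\{x^{2}/p^{r}\}=\{u^{2}/p^{r-2j}\}$. If $2j\ge r$ this is $0$, and otherwise $1\le r-2j\le r$. Since $\{u^{2}/p^{r-2j}\}$ depends only on $u\bmod p^{r-2j}$, and reduction carries the $p^{r-j-1}(p-1)$ admissible residues $u\in[0,p^{r-j})$, $p\nmid u$, onto $(\ZZ/p^{r-2j}\ZZ)^{\times}$ with each fibre of size $p^{j}$, I obtain
\[
\sum_{x=0}^{p^{r}-1}\left\{\frac{x^{2}}{p^{r}}\right\}=\sum_{j=0}^{\lfloor(r-1)/2\rfloor}p^{j}\,T(r-2j),\qquad T(m)\df\sum_{\substack{v=0\\ p\nmid v}}^{p^{m}-1}\left\{\frac{v^{2}}{p^{m}}\right\}.
\]

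Next I would compute $T(m)$ for $m\ge1$. As $p$ is odd, $(\ZZ/p^{m}\ZZ)^{\times}$ is cyclic, every unit square has exactly two square roots, and $v$ is a unit square precisely when $\left(\frac{v}{p}\right)=1$; hence $T(m)=\sum_{0\le v<p^{m},\,p\nmid v}\bigl(1+\left(\frac{v}{p}\right)\bigr)\frac{v}{p^{m}}$. The contribution of the constant term $1$ is the elementary sum $\sum_{p\nmid v}\frac{v}{p^{m}}=\tfrac12 p^{m-1}(p-1)$. For the other term, $v\mapsto\left(\frac{v}{p}\right)$ is exactly the imprimitive character modulo $p^{m}$ attached to the Legendre symbol modulo $p$, so Lemma \ref{lem:imprim2} gives
\[
\sum_{\substack{0\le v<p^{m}\\ p\nmid v}}\left(\frac{v}{p}\right)\frac{v}{p^{m}}=\left(\sum_{d\mid p^{m-1}}\mu(d)\left(\tfrac{d}{p}\right)\right)\left(\sum_{s=0}^{p-1}\left(\tfrac{s}{p}\right)\frac{s}{p}\right)=1\cdot(-h_{p}^{\ast})=-h_{p}^{\ast},
\]
the first factor collapsing because $\left(\frac{p}{p}\right)=0$ kills $d=p$ and $\mu$ kills the higher prime powers, and the second being $-h_{p}^{\ast}$ by definition. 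Thus $T(m)=\tfrac12 p^{m-1}(p-1)-h_{p}^{\ast}$.

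Feeding this back in, $p^{j}T(r-2j)=\tfrac{p-1}{2}p^{\,r-1-j}-p^{j}h_{p}^{\ast}$, so
\[
\sum_{x=0}^{p^{r}-1}\left\{\frac{x^{2}}{p^{r}}\right\}=\frac{p-1}{2}\sum_{j=0}^{\lfloor(r-1)/2\rfloor}p^{\,r-1-j}\;-\;h_{p}^{\ast}\sum_{j=0}^{\lfloor(r-1)/2\rfloor}p^{j}.
\]
Both sums are finite geometric series. The second equals $\dfrac{p^{\lfloor(r+1)/2\rfloor}-1}{p-1}$, using $\lfloor(r-1)/2\rfloor+1=\lfloor(r+1)/2\rfloor$. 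The first equals $\tfrac12\bigl(p^{r}-p^{\,r-1-\lfloor(r-1)/2\rfloor}\bigr)=\tfrac12\bigl(p^{r}-p^{\lfloor r/2\rfloor}\bigr)$, using $r-1-\lfloor(r-1)/2\rfloor=\lceil(r-1)/2\rceil=\lfloor r/2\rfloor$. Together these give \eqref{eq:sum1}.

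The only part I expect to require genuine care is the first step: correctly pinning down the ranges of $j$ and $u$, verifying the fibre sizes $p^{j}$ in the reduction modulo $p^{r-2j}$, and handling the floor/ceiling identities $\lfloor(r-1)/2\rfloor+1=\lceil r/2\rceil$ and $r-1-\lfloor(r-1)/2\rfloor=\lfloor r/2\rfloor$ that make the stratification and the final geometric series line up. After that it is bookkeeping together with the two imprimitive-character lemmas already established.
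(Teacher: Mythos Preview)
Your proof is correct and follows essentially the same approach as the paper. The paper stratifies the \emph{values} $y=x^{2}$ by their $p$-adic valuation and counts preimages via $s_{p^r}(y)$, whereas you stratify the \emph{arguments} $x=p^{j}u$ and pass to the quotient $(\ZZ/p^{r-2j}\ZZ)^{\times}$; after this reorganisation the two computations coincide term by term (your index $j$ is the paper's $t$, and your $(1+\left(\tfrac{v}{p}\right))$ is exactly the paper's square-root count). One trivial remark: for the character sum you invoke Lemma~\ref{lem:imprim2}, but since the only prime dividing $p^{m}/p$ is $p$ itself and $\left(\tfrac{p}{p}\right)=0$, the M\"obius factor collapses to $1$ and Lemma~\ref{lem:imprim1} already suffices, as in the paper.
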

\begin{proof}
In general, let $s_n(y)$ denote the number of solutions to $x^2 \equiv y \mod{n}$.  If $p\nmid a$ and $0 < ap^\ell < p^r$, then
\[
s_{p^r}\left(ap^\ell\right) = \begin{cases}
p^{\ell/2} \left(1+\left(\frac{a}{p}\right)\right) & \mbox{ if } 2\mid\ell \\
0 & \mbox{ if } 2\nmid\ell
\end{cases}.
\]
\begin{align}
\sum_{x=0}^{p^r-1} \left\{\frac{x^2}{p^r}\right\} 
&= \sum_{y=0}^{p^r-1} s_{p^r}(y) \frac{y}{p^r} \nonumber \\
&= \sum_{\ell=0}^{r-1} \sum_{\substack{0<a<p^{r-\ell}\\p\nmid a}} s_{p^r}(ap^\ell) \frac{a}{p^{r-\ell}} \nonumber \\
&= \sum_{t=0}^{\lfloor\frac{r-1}{2}\rfloor} \sum_{\substack{0<a<p^{r-2t}\\p\nmid a}} p^t \left(1+\left(\frac{a}{p}\right)\right) \frac{a}{p^{r-2t}} \nonumber \\
&= \sum_{t=0}^{\lfloor\frac{r-1}{2}\rfloor} p^t\left(\sum_{\substack{0<a<p^{r-2t}\\p\nmid a}} \frac{a}{p^{r-2t}}  + \sum_{\substack{0<a<p^{r-2t}\\p\nmid a}} \left(\frac{a}{p}\right)\frac{a}{p^{r-2t}} \right).\label{eq:shortsum}
\end{align}
The first internal sum is 
\begin{align*}
\sum_{\substack{0<a<p^{r-2t}\\p\nmid a}} \frac{a}{p^{r-2t}}
&= \sum_{a=0}^{p^{r-2t}-1} \frac{a}{p^{r-2t}} - \sum_{b=0}^{p^{r-2t-1}-1} \frac{b}{p^{r-2t-1}} \\
&= \frac{1}{2}\left(p^{r-2t}-1\right) - \frac{1}{2}\left(p^{r-2t-1}-1\right) \\
&= \frac{1}{2}\left(p^{r-2t}-p^{r-2t-1}\right).
\end{align*}
The second internal sum is, by Lemma \cref{lem:imprim1},
\[
\sum_{\substack{0<a<p^{r-2t}\\p\nmid a}} \left(\frac{a}{p}\right)\frac{a}{p^{r-2t}}= \sum_{a=0}^{p-1} \left(\frac{a}{p}\right)\frac{a}{p} = h_p^\ast.
\]
After plugging the values of these internal sums to (\cref{eq:shortsum}), we obtain an expression involving a telescoping sum and a geometric series.  This expression simplifies to (\cref{eq:sum1}).
\end{proof}

\begin{prop}\label{prop:sum2}
\begin{align}
\sum_{\substack{0<x<2p^{r}\\x \text{ odd}}} \left\{\frac{x^2}{4p^r}\right\} = &\frac{1}{2}\left(p^r-p^{\left\lfloor\frac{r}{2}\right\rfloor}\right) + \left\{\frac{p^r}{4}\right\}p^{\left\lfloor\frac{r}{2}\right\rfloor} \nonumber \\
&-\frac{p^{\left\lfloor\frac{r+1}{2}\right\rfloor}-1}{2(p-1)}\left(\frac{1}{2}\left(1-(-1)^{\frac{p-1}{2}}\right)+\left(1-\left(\frac{2}{p}\right)\right)h_p^\ast+h_p^\prime\right).
\label{eq:sum2}
\end{align}
\end{prop}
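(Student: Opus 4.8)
The plan is to reduce this sum over odd residues modulo $2p^r$ to the sum over \emph{all} residues modulo $p^r$ already evaluated in Proposition \ref{prop:sum1}, at the cost of a ``carry'' term which will turn out to count quadratic residues modulo $p^r$ lying in a short interval --- the $0$-to-$p/4$ bias alluded to in the introduction.

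Since $x$ is odd, $x^2 \equiv 1 \pmod 4$. Choosing integers $u,v$ with $up^r + 4v = 1$ (so $u \equiv p^r \pmod 4$ and $v \equiv 4^{-1} \pmod{p^r}$) and splitting $\frac{x^2}{4p^r} = \frac{x^2 u}{4} + \frac{x^2 v}{p^r}$, one gets
$$
\left\{\frac{x^2}{4p^r}\right\} = \left\{\frac{p^r}{4}\right\} + \left\{\frac{(2^{-1}x)^2}{p^r}\right\} - c_x, \qquad c_x \in \{0,1\},
$$
where $c_x = 1$ precisely when $\left\{(2^{-1}x)^2/p^r\right\} \ge 1 - \{p^r/4\} =: \theta$; thus $\theta = \tfrac34$ if $p^r \equiv 1 \pmod 4$ and $\theta = \tfrac14$ if $p^r \equiv 3 \pmod 4$. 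As $x$ ranges over the $p^r$ odd residues modulo $2p^r$, reduction modulo $p^r$ followed by multiplication by $2^{-1}$ is a bijection onto $\ZZ/p^r\ZZ$, so summing gives
$$
\sum_{\substack{0<x<2p^{r}\\x\text{ odd}}} \left\{\frac{x^2}{4p^r}\right\} = p^r\left\{\frac{p^r}{4}\right\} + \sum_{y=0}^{p^r-1}\left\{\frac{y^2}{p^r}\right\} - C, \qquad C \df \#\left\{c \bmod p^r : \left\{\tfrac{c^2}{p^r}\right\} \ge \theta\right\}.
$$
The middle sum is Proposition \ref{prop:sum1}, so everything reduces to the interval count $C$.

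To evaluate $C$, I would recycle the bookkeeping of the proof of Proposition \ref{prop:sum1}: a nonzero square $c^2 \equiv ap^{2t} \pmod{p^r}$ with $p\nmid a$, $0<a<p^{r-2t}$, satisfies $\{c^2/p^r\} \ge \theta$ iff $a \ge \theta p^{r-2t}$, while squares congruent to $0$ do not contribute since $\theta > 0$; hence
$$
C = \sum_{t=0}^{\lfloor\frac{r-1}{2}\rfloor} p^t \sum_{\substack{\theta p^{r-2t} \le a < p^{r-2t}\\ p\nmid a}} \left(1 + \left(\tfrac ap\right)\right).
$$
The contribution of the $1$'s is elementary: counting integers prime to $p$ in $[\theta p^m, p^m)$ and summing the resulting telescoping and geometric series produces $\{p^r/4\}\bigl(p^r - p^{\lfloor r/2\rfloor}\bigr)$ together with a boundary discrepancy equal to $\tfrac14\bigl(1-(-1)^{\frac{p-1}{2}}\bigr)\frac{p^{\lfloor(r+1)/2\rfloor}-1}{p-1}$, arising because the endpoints $\theta p^m$ are never integers. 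For the Legendre-symbol part, translation invariance modulo $p$ collapses each inner sum to $-\sum_{a=1}^{s}\left(\tfrac ap\right)$ with $s = \lfloor 3p/4\rfloor$ when $p\equiv 1 \pmod 4$ and $s = \lfloor p/4\rfloor$ when $p\equiv 3 \pmod 4$ (Lemmas \ref{lem:imprim1} and \ref{lem:imprim2} handling the descent in modulus when $r>1$). Dirichlet's class number formula then evaluates this partial Legendre sum: one shows $\sum_{a=1}^{\lfloor p/4\rfloor}\left(\tfrac ap\right) = \tfrac12\bigl(1 + (\tfrac2p)\bigr)h_p^\ast + \tfrac12 h_p^\prime$ (and $\sum_{a=1}^{\lfloor 3p/4\rfloor}(\tfrac ap) = -\sum_{a=1}^{\lfloor p/4\rfloor}(\tfrac ap)$ when $p\equiv 1$), the point being that the product of $\chi_{-4}$ with the Legendre symbol modulo $p$ is the Kronecker character of $\QQ(\sqrt{-p})$ when $p\equiv 1 \pmod 4$ and is an even character (hence gives $h_p^\prime = 0$) when $p\equiv 3 \pmod 4$. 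Assembling all the pieces and simplifying yields \eqref{eq:sum2}.

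I expect the main obstacle to be this last evaluation of the $0$-to-$p/4$ partial Legendre sum in terms of $h_p^\ast$ and $h_p^\prime$ with exactly the right constants and $p\bmod 8$ dependence --- one must keep careful track of the discriminant of $\QQ(\sqrt{-p})$ being $-p$ or $-4p$ according as $p\equiv 3$ or $1 \pmod 4$, of the parity of the relevant Dirichlet character, and of how the telescoping factor $\frac{p^{\lfloor(r+1)/2\rfloor}-1}{p-1}$ threads through every term, while confirming that the stray combinatorial corrections collapse to precisely $\tfrac12\bigl(1-(-1)^{\frac{p-1}{2}}\bigr)$. A numerical spot-check (e.g.\ $p=5$, $r=1$ gives $5/4$) is a useful safeguard against sign errors.
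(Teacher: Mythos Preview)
Your approach is correct and genuinely different from the paper's. You split the fractional part $\{x^2/(4p^r)\}$ via the Chinese remainder theorem into $\{p^r/4\} + \{(2^{-1}x)^2/p^r\}$ minus a carry, thereby reducing the problem to Proposition~\ref{prop:sum1} plus an interval count $C$ of squares modulo $p^r$ lying in $[\theta p^r, p^r)$. The paper instead keeps the modulus $4p^r$ throughout: it writes the sum as $\tfrac12\sum_y s_{4p^r}(y)\,y/(4p^r)$, factors $s_{4p^r}(y) = s_4(y)s_{p^r}(y)$, and expands the product $\bigl(1+(-1)^{(a-1)/2}\bigr)\bigl(1+(\tfrac{a}{p})\bigr)$ into four pieces. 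Each piece is then a weighted character sum $\sum \chi(a)\,a/n$ of exactly the type covered by Lemmas~\ref{lem:imprim1}--\ref{lem:imprim2}, so the evaluation is uniform and the characters $1$, $\chi_{-4}$, $(\tfrac{\cdot}{p})$, and $\chi_{-4}(\tfrac{\cdot}{p})$ contribute, respectively, the main term, the $\tfrac12(1-(-1)^{(p-1)/2})$ term, the $(1-(\tfrac2p))h_p^\ast$ term, and the $h_p'$ term. Your route has the appeal of recycling Proposition~\ref{prop:sum1} wholesale, but it concentrates all remaining difficulty in the partial Legendre sum $\sum_{a\le \lfloor p/4\rfloor}(\tfrac{a}{p})$, which then requires its own case analysis; the paper's route distributes the work more evenly and never needs a partial character sum.

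Two small points to tighten. First, Lemmas~\ref{lem:imprim1}--\ref{lem:imprim2} concern weighted sums $\sum\chi(a)a/n$, not partial sums $\sum_{a\le N}\chi(a)$; the ``descent in modulus'' you need for $C$ is simply the periodicity of $(\tfrac{\cdot}{p})$, so you should not cite those lemmas there. Second, $\theta$ depends on $p^r \bmod 4$, not on $p \bmod 4$, so for even $r$ one always has $\theta = 3/4$ regardless of $p$. This does not break your argument, since for $p\equiv 1\pmod 4$ one has $\sum_{a\le\lfloor 3p/4\rfloor}(\tfrac ap) = -\sum_{a\le\lfloor p/4\rfloor}(\tfrac ap)$ and for $p\equiv 3\pmod 4$ the two partial sums coincide (pair $a\leftrightarrow p-a$ over the middle half), but you should say so explicitly rather than silently conflating the two cases.
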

\begin{proof}
The proof is similar to that of Proposition \cref{prop:sum1}.  For now, we leave out the terms where $p^r \mid x^2$.  We use the multiplicativity of $s_n(a)$ in $n$, which follows from the Chinese remainder theorem.  
\begin{align}
\sum_{\substack{0<x<2p^{r}\\x \text{ odd}\\p^r \nmid x^2}} \left\{\frac{x^2}{4p^r}\right\} 
&= \frac{1}{2} \sum_{\substack{0<y<4p^r\\2\nmid y\\p^r\nmid y}} s_{4p^r}(y) \frac{y}{4p^r} \nonumber \\
&= \frac{1}{2} \sum_{\substack{0<y<4p^r\\2\nmid y\\p^r\nmid y}} s_4(y)s_{p^r}(y) \frac{y}{4p^r} \nonumber\\
&= \frac{1}{2} \sum_{t=0}^{\lfloor\frac{r-1}{2}\rfloor} \sum_{\substack{0<a<4p^{r-2t}\\p\nmid a}} \left(1+(-1)^\frac{a-1}{2}\right)p^t\left(1+\left(\frac{a}{p}\right)\right) \frac{a}{4p^{r-2t}} \nonumber\\
&= \frac{1}{2} \sum_{t=0}^{\lfloor\frac{r-1}{2}\rfloor} p^t 
\left(\sum_{\substack{0<a<4p^{r-2t}\\2p\nmid a}} \frac{a}{4p^{r-2t}}
+\sum_{\substack{0<a<4p^{r-2t}\\2p\nmid a}} (-1)^{\frac{a-1}{2}} \frac{a}{4p^{r-2t}}\right.\nonumber\\
&\quad \left.+\sum_{\substack{0<a<4p^{r-2t}\\2p\nmid a}} \left(\frac{a}{p}\right) \frac{a}{4p^{r-2t}}
+\sum_{\substack{0<a<4p^{r-2t}\\2p\nmid a}} (-1)^{\frac{a-1}{2}}\left(\frac{a}{p}\right) \frac{a}{4p^{r-2t}}
\right).\label{eq:longsum}
\end{align}
We evaluate the first internal sum:
\begin{align*}
\sum_{\substack{0<a<4p^{r-2t}\\a \text{ odd}}} \frac{a}{4p^{r-2t}}
&= 
\sum_{a=0}^{4p^{r-2t}-1} \frac{a}{4p^{r-2t}}
-\sum_{a=0}^{2p^{r-2t}-1} \frac{a}{2p^{r-2t}}
-\sum_{a=0}^{4p^{r-2t-1}-1} \frac{a}{4p^{r-2t-1}}
+\sum_{a=0}^{2p^{r-2t-1}-1} \frac{a}{2p^{r-2t-1}} \\
&= \frac{4p^{r-2t}-1}{2} - \frac{2p^{r-2t}-1}{2} - \frac{4p^{r-2t-1}-1}{2} + \frac{2p^{r-2t-1}-1}{2} \\
&= p^{r-2t}-p^{r-2t-1}.
\end{align*}

The other internal sums may be evaluated using Lemma \cref{lem:imprim2}:
\begin{align*}
\sum_{\substack{0<a<4p^{r-2t}\\2p\nmid a}} (-1)^{\frac{a-1}{2}} \frac{a}{4p^{r-2t}}
&= \left(1-(-1)^{\frac{p-1}{2}}\right)\sum_{\substack{y=0\\2\nmid y}}^{3}  (-1)^{\frac{y-1}{2}}\frac{y}{4} \\
&= -\frac{1}{2}\left(1-(-1)^{\frac{p-1}{2}}\right);\\
\sum_{\substack{0<a<4p^{r-2t}\\2p\nmid a}} \left(\frac{a}{p}\right) \frac{a}{4p^{r-2t}}
&= \left(1-\left(\frac{2}{p}\right)\right)\sum_{\substack{y=0\\p\nmid y}}^{p-1} \left(\frac{y}{p}\right) \frac{y}{p} \\
&= -\left(1-\left(\frac{2}{p}\right)\right)h_p^\ast; \\
\sum_{\substack{0<a<4p^{r-2t}\\2p\nmid a}} (-1)^{\frac{a-1}{2}}\left(\frac{a}{p}\right) \frac{a}{4p^{r-2t}}
&= \sum_{\substack{a=0\\2p\nmid a}}^{4p} (-1)^{\frac{a-1}{2}}\left(\frac{a}{p}\right) \frac{a}{4p} \\
&= -h_p^\prime.
\end{align*}
After plugging the values of the internal sums into (\cref{eq:longsum}), we get a telescoping sum and a geometric series, yielding
\begin{align*}
\sum_{\substack{0<x<2p^{r}\\x \text{ odd}\\p^r \nmid x^2}} \left\{\frac{x^2}{4p^r}\right\} = \frac{1}{2}\left(p^r-p^{\left\lfloor\frac{r}{2}\right\rfloor}\right) 
-\frac{p^{\left\lfloor\frac{r+1}{2}\right\rfloor}-1}{2(p-1)}\left(\frac{1}{2}\left(1-(-1)^{\frac{p-1}{2}}\right)+\left(1-\left(\frac{2}{p}\right)\right)h_p^\ast+h_p^\prime\right). 
\end{align*}
Moreover, the terms we left out are those of the form $\left\{\frac{x^2/p^r}{4}\right\}$, where $x$ is odd and $x^2/p^r$ is an integer.  In this case, $x^2/p^r \equiv p^r \mod{4}$, so each of these terms is equal to $\left\{\frac{p^r}{4}\right\}$.  There are $p^{\left\lfloor \frac{r}{2} \right\rfloor}$ such terms, so
\begin{equation*}
\sum_{\substack{0<x<2p^{r}\\x \text{ odd}\\p^r \mid x^2}} \left\{\frac{x^2}{4p^r}\right\}
= \left\{\frac{p^r}{4}\right\}p^{\left\lfloor \frac{r}{2} \right\rfloor}.
\end{equation*}
The proposition follows. 
\end{proof}

\begin{thm}
\label{t:TrL}
Suppose $p$ is an odd prime, let $r \geq 1$, and let $L$ denote a standard choice of exponents for the cyclic Weil representation associated to the quadratic form $q(x) = \frac{1}{2m}x^2$, where $m=2p^r$.  Then,
\begin{equation}
\Tr(L) = p^r - \left\{\frac{p^r}{4}\right\}p^{\left\lfloor\frac{r}{2}\right\rfloor}
+\frac{p^{\left\lfloor \frac{r+1}{2}\right\rfloor}-1}{2(p-1)}\cdot\left\{\begin{array}{cc}
h_p & \mbox{ if } p \equiv 1 \mod 4 \\
4h_p^\ast + 1 & \mbox{ if } p \equiv 3 \mod 8 \\
2h_p + 1 & \mbox{ if } p \equiv 7 \mod 8 \\
\end{array}
\right..
\end{equation}
Here, $h_p^\ast = h_p$ except when $p=3$, in which case $h_p^\ast = \frac{1}{3}$.
\end{thm}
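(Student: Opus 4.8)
The plan is to read off $\Tr(L)$ directly from the definition of a standard choice of exponents and then reduce everything to the two preceding propositions. Since $\rho_D(T)=e^{2\pi i L}$ acts diagonally in the $\delta$-basis by $\delta_x\mapsto e(-q(x))\delta_x$ and $L$ has all eigenvalues with real part in $[0,1)$, the eigenvalues of $L$ are precisely the representatives $\{-q(x)\}$ for $x$ running over $D=\ZZ/2p^r\ZZ$, whence $\Tr(L)=\sum_{x=0}^{2p^r-1}\left\{-\tfrac{x^2}{4p^r}\right\}$. I would first convert this to a sum of ordinary fractional parts using $\{-y\}=1-\{y\}$ for $y\notin\ZZ$ and $\{-y\}=0$ for $y\in\ZZ$, obtaining $\Tr(L)=N-\sum_{x=0}^{2p^r-1}\left\{\tfrac{x^2}{4p^r}\right\}$, where $N$ is the number of residues $x\bmod 2p^r$ with $x^2/(4p^r)\notin\ZZ$.

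Next I would split the sum over $x$ according to parity. For even $x=2x'$ one has $x^2/(4p^r)=x'^2/p^r$ with $x'$ ranging over $\{0,\dots,p^r-1\}$, so the even part of $\sum\{x^2/(4p^r)\}$ is exactly the left-hand side of Proposition \ref{prop:sum1}; for odd $x$ with $0<x<2p^r$ the odd part is exactly the left-hand side of Proposition \ref{prop:sum2}. To evaluate $N$: an even term is integral iff $p^{\lceil r/2\rceil}\mid x'$, which happens for exactly $p^{\lfloor r/2\rfloor}$ values of $x'$, while an odd term is never integral since then $x^2$ is odd and $4\nmid x^2$. Hence $N=2p^r-p^{\lfloor r/2\rfloor}$.

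Substituting the right-hand sides of Propositions \ref{prop:sum1} and \ref{prop:sum2}, the two copies of $\tfrac12(p^r-p^{\lfloor r/2\rfloor})$ together with $N$ collapse to $p^r-\{p^r/4\}p^{\lfloor r/2\rfloor}$, and writing $C:=\frac{p^{\lfloor(r+1)/2\rfloor}-1}{p-1}$ the class-number terms combine (using $2h_p^\ast+(1-(\tfrac2p))h_p^\ast=(3-(\tfrac2p))h_p^\ast$) to give
\[
\Tr(L)=p^r-\left\{\tfrac{p^r}{4}\right\}p^{\lfloor r/2\rfloor}+\frac{C}{2}\left(\left(3-\left(\tfrac{2}{p}\right)\right)h_p^\ast+\tfrac12\left(1-(-1)^{(p-1)/2}\right)+h_p^\prime\right).
\]
The final step is a case analysis on $p\bmod 8$, inserting the values of $h_p^\ast$, $h_p^\prime$, $(-1)^{(p-1)/2}$ and $\left(\tfrac2p\right)$ recorded just before the statement: for $p\equiv 1\pmod 4$ both the $h_p^\ast$ term and the term $\tfrac12(1-(-1)^{(p-1)/2})$ vanish while $h_p^\prime=h_p$, giving coefficient $h_p$; for $p\equiv 3\pmod 8$ one has $\left(\tfrac2p\right)=-1$ and $h_p^\prime=0$, giving $4h_p^\ast+1$; for $p\equiv 7\pmod 8$ one has $\left(\tfrac2p\right)=1$, $h_p^\prime=0$, and $h_p^\ast=h_p$ (since $p\neq 3$), giving $2h_p+1$.

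The main work of this circle of ideas — the Gauss-sum count of square roots modulo $p^r$ and $4p^r$ and the appearance of $h_p^\ast,h_p^\prime$ via the Dirichlet class number formula and Lemmas \ref{lem:imprim1}--\ref{lem:imprim2} — has already been carried out in Propositions \ref{prop:sum1} and \ref{prop:sum2}, so I do not anticipate a serious obstacle in the theorem itself. The only points requiring care are the passage from $\{-q(x)\}$ to $\{q(x)\}$, in particular correctly determining $N$ and observing that no odd $x$ produces an integral term, followed by the routine $p\bmod 8$ bookkeeping.
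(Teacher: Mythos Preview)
Your proposal is correct and follows essentially the same approach as the paper's own proof: both convert $\{-q(x)\}$ to $1-\{q(x)\}$ while counting the $p^{\lfloor r/2\rfloor}$ residues for which $q(x)\in\ZZ$, split the resulting sum $\sum_{x=0}^{2p^r-1}\{x^2/(4p^r)\}$ by parity of $x$ to feed directly into Propositions~\ref{prop:sum1} and~\ref{prop:sum2}, and then carry out the $p\bmod 8$ case analysis. The only cosmetic difference is that the paper phrases the integrality condition as $2p^{\lfloor (r+1)/2\rfloor}\mid x$ rather than separating even and odd $x$, but this is the same count.
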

\begin{proof}
We have
\begin{align*}
\Tr(L) &= \sum_{x=1}^{2p^r} \left\{-\frac{x^2}{4p^r}\right\} \\
          &= \sum_{x=0}^{2p^r-1} \left\{-\frac{x^2}{4p^r}\right\} \\
          &= \sum_{\substack{0 \leq x < 2p^r\\2p^{\left\lfloor\frac{r+1}{2}\right\rfloor}\nmid x}} \left(1-\left\{\frac{x^2}{4p^r}\right\}\right) \\
          &= 2p^r - p^{\left\lfloor \frac{r}{2}\right\rfloor} - \sum_{x=0}^{2p^r-1} \left\{\frac{x^2}{4p^r}\right\}.
\end{align*}
Moreover,
\begin{align}
\sum_{x=0}^{2p^r-1} \left\{\frac{x^2}{4p^r}\right\}
&= \sum_{y=0}^{p^r-1} \left\{\frac{y^2}{p^r}\right\} + \sum_{\substack{0<x<2p^r\\x\text{ odd}}} \left\{\frac{x^2}{4p^r}\right\}.
\end{align}
These two sums have already been evaluated in Propositions \cref{prop:sum1} and \cref{prop:sum2}.  Plugging everything in, we get
\begin{equation*}
\Tr(L) = p^r - \left\{\frac{p^r}{4}\right\}p^{\left\lfloor\frac{p}{2}\right\rfloor} + \frac{p^{\left\lfloor\frac{r+1}{2}\right\rfloor}-1}{2(p-1)}\left(\frac{1}{2}\left(1-(-1)^{\frac{p-1}{2}}\right)+\left(3-\left(\frac{2}{p}\right)\right)h_p^\ast+h_p^\prime\right).
\end{equation*}
Splitting into cases modulo $8$, the theorem is proved.
\end{proof}

\begin{rmk}
It is well-known that there are an equal number of quadratic and nonquadratic residues between $0$ and $p/2$ when $p\equiv 1 \pmod{4}$, and that there are $\frac{p-1}{4} + h_p$ quadratic residues in that interval when $p \equiv 3 \pmod{4}$. Less well-known (see \cite{BerndtChowla}) is the fact that if $p \equiv 3\pmod{8}$, then there are an equal number of quadratic and nonquadratic residues in the interval between $0$ and $p/4$, and that if $p\equiv 7\pmod{8}$, then there are an equal number of quadratic and nonquadratic residues in the interval between $p/4$ and $p/2$. Our original computation of $\Tr(L)$ in the case $m = 2p$ made more explicit use of these facts to deduce our formulae for $\Tr(L)$ in those cases.

When $p\equiv 1 \pmod{4}$ we used the fact that
\[
h_p = -\sum_{a=0}^{4p-1}(-1)^{\frac{a-1}{2}}\left(\frac ap\right)\frac{a}{4p}.
\]
It is not hard to use this to prove that
\[
  h_p = 2\sum_{a=1}^{\frac{p-1}{4}}\left(\frac{a}{p}\right).
\]
Hence there are $\frac{p-1}{8} + \frac{1}{2}h_p$ quadratic residues between $0$ and $p/4$ when $p \equiv 1 \pmod{4}$. Thankfully, these various biases among the distributions of quadratic residues do not affect the main term of $\Tr(L)$, which is $p^r$ if $m = 2p^r$. This regularity in the main term will allow us to compute the asymptotic behaviour of generating weights.
\end{rmk}

\subsection{Critical weights}

Since the signature of $D=A_{2p^r}$ is odd, the only critical weights are $k=1/2,3/2$, which can be tackled using Theorem \ref{theorem:CriticalWeights}. The level of $D=A_{2p^r}$ is $N=4p^r$; thus, by Theorem \ref{theorem:CriticalWeights}, we have 
$$
\dim M_{1/2}(\rho_D) = \dim\left(\mathrm{Inv}(\rho_{A^{-1}_{2p^{r-1}}\oplus A^{-1}_{2p^r}})^{\epsilon\times 1} \oplus \mathrm{Inv}(\rho_{A^{-1}_{2p^r}\oplus A^{-1}_{2p^r}})^{\epsilon\times 1}\right).
$$
By Theorem 2 of \cite{SkoruppaCritical}, a necessary condition for $\mathrm{Inv}(\rho_{A^{-1}_{2l}\oplus A^{-1}_{2p^r}})\neq 0$ is that the size of $A^{-1}_{2l}\oplus A^{-1}_{2p^r}$ is a perfect square, i.e. $l=p^r$, therefore $\mathrm{Inv}(\rho_{A^{-1}_{2p^{r-1}}\oplus A^{-1}_{2p^r}}) = 0$. Another necessary condition in the same theorem  is that $\Omega_{A^{-1}_{2l}\oplus A^{-1}_{2p^r}}(1) = 1$. However, 
$$
\Omega_{A^{-1}_{2p^r}\oplus A^{-1}_{2p^r}}(1) =\Omega_{A^{-1}_{2p^r}}(1)^2 =  -i \neq 1,
$$
and thus 
\begin{equation}
\label{eqn:weight1/2}
M_{1/2}(\rho_{D}) = 0
\end{equation}
for all $p$ and $r$. To compute $\dim M_{3/2}(\rho_D)$, it follows from Theorem \ref{theorem:CriticalWeights} and the above discussion that 
$$
\dim S_{1/2}(\rho^*_{D}) =\dim \mathrm{Inv}(\rho_{A^{-1}_{2p^{r}}\oplus A_{2p^r}})^{\epsilon\times 1}  -\dim\mathrm{Inv}(\rho_{A^{-1}_{2p^r}\oplus A_{2p^r}})^{\mathrm{O}(A^{-1}_{2p^r})\times 1}.
$$
The terms of this difference might not necessarily vanish. However, we have:

\begin{prop}
Let $p$ be an odd prime, $r\in \ZZ_{>0}$. Then
$$
\mathrm{O}(A_{2p^r}^{-1}) = \{\pm 1\}.
$$
\end{prop}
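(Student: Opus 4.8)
\emph{Proof proposal.} The plan is to identify $\mathrm{O}(A_{2p^r}^{-1})$ with an explicit subset of $(\ZZ/2p^r\ZZ)^\times$ and then solve an elementary congruence. The underlying group $\ZZ/2p^r\ZZ$ is cyclic, so every automorphism is multiplication by a unit $u$; write $[u]$ for this map. Then $[u] \in \mathrm{O}(A_{2p^r}^{-1})$ if and only if $-\frac{(ux)^2}{4p^r} \equiv -\frac{x^2}{4p^r} \pmod{\ZZ}$ for all $x \in \ZZ$, i.e. if and only if $4p^r \mid (u^2-1)x^2$ for every integer $x$. Taking $x = 1$ shows this is equivalent to the single divisibility $4p^r \mid u^2-1$ (and this case $x=1$ trivially implies all other cases). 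I would also note in passing that this condition depends only on $u$ modulo $2p^r$, as it must: replacing $u$ by $u+2p^r$ changes $u^2-1$ by $4p^r(u+p^r)$, a multiple of $4p^r$.

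Next I would simplify this divisibility. Any $u$ satisfying $4p^r\mid u^2-1$ has $4\mid u^2-1$, hence $u$ is odd, hence $u$ is a unit modulo $2p^r$; and since $\gcd(4,p^r)=1$, for odd $u$ the condition $4p^r\mid u^2-1$ is equivalent to the single congruence $u^2 \equiv 1 \pmod{p^r}$. Because $p$ is odd, $(\ZZ/p^r\ZZ)^\times$ is cyclic, so $u^2 \equiv 1 \pmod{p^r}$ forces $u \equiv \pm 1 \pmod{p^r}$. Finally, reduction modulo $p^r$ is an isomorphism $(\ZZ/2p^r\ZZ)^\times \to (\ZZ/p^r\ZZ)^\times$ (both groups have order $\varphi(p^r)$ and the map is surjective), so the class of the unit $u$ modulo $p^r$ determines it modulo $2p^r$; since $\pm 1$ are already units modulo $2p^r$ mapping to $\pm 1$ modulo $p^r$, we conclude $u \equiv \pm 1 \pmod{2p^r}$. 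Conversely $[\pm 1]$ clearly preserves $q$. Hence $\mathrm{O}(A_{2p^r}^{-1}) = \{[1],[-1]\} = \{\pm 1\}$, the two elements being distinct since $2p^r > 2$.

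I do not expect a genuine obstacle here; the computation is routine once set up. The only point requiring a little care is the passage between the modulus $4p^r$ that appears naturally in the quadratic form $q(x)=\pm\frac{x^2}{4p^r}$ and the modulus $2p^r$ on which the automorphisms actually live, and this is precisely what is resolved by the observation that the admissible $u$ are automatically odd. (The same argument shows $\mathrm{O}(A_{2p^r}) = \{\pm 1\}$ as well, since replacing $q$ by $-q$ does not affect which automorphisms preserve it.)
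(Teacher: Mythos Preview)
Your proof is correct and follows essentially the same approach as the paper: both reduce the problem to determining which $u \in \{0,\ldots,2p^r-1\}$ satisfy $u^2 \equiv 1 \pmod{4p^r}$, and then solve this congruence by first reducing modulo $p^r$ (using cyclicity of $(\ZZ/p^r\ZZ)^\times$) and then imposing the parity constraint coming from the modulus $4$. The only cosmetic difference is that the paper explicitly lists the four candidates $\{1,\,p^r-1,\,p^r+1,\,2p^r-1\}$ congruent to $\pm 1$ modulo $p^r$ and discards the two even ones, whereas you phrase the same step via the isomorphism $(\ZZ/2p^r\ZZ)^\times \xrightarrow{\sim} (\ZZ/p^r\ZZ)^\times$.
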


\begin{proof}
By definition, 
$$
A^{-1}_{2p^r} = \left(\ZZ/2p^r\ZZ, x \mapsto \frac{-x^2}{4p^r}\right),
$$
and thus there is a bijection
$$
\mathrm{O}(A_{2p^r}^{-1}) \longleftrightarrow \{ x\in \{0,\ldots, 2p^r-1\} : x^2 = 1 \mod 4p^r\}.
$$
Now if $x^2 = 1 \mod 4p^r$ then $x = \pm 1 \mod p^r$, so that $x \in \{1,p^r-1,p^r+1,2p^r-1\}$. But we must also have $2\mid x + 1$ , which leaves only $x = 1,2p^r-1$. Both these numbers satisfy the congruence, thus $\mathrm{O}(A_{2p^r}^{-1}) \leftrightarrow \{ \pm 1\}$, as required. 
\end{proof}

It follows that $S_{1/2}(\rho^*_{A_{2p^r}}) = 0$ for all $p$ and $r$, since $-1\in \mathrm{O}(D)$ acts by $\epsilon$ on $\CC[D]$. Therefore 
\begin{equation}
\label{eqn:weight3/2}
\dim M_{3/2}(\rho_D) = \chi(\cMbar_{1/2}, \cW_{3/2}(D)).
\end{equation}

\subsection{Generating weights and distribution as $p\to\infty$}

Recall from Theorem \ref{thm:FreeModuleThm}, (iii) that since $\rho = \rho_{A_{2p^r}}$ is unitary, its generating weights all lie between $0,\ldots, 23/2$. We now give a simple formula for the multiplicity $m_j$ of the weight $j$ for each $j = 0,\ldots, 23/2$. To do so, consider the Poincar\'{e} generating series $\sum_{k\in \frac{1}{2}\ZZ}\dim M_k(\rho) t^k$, which can be expressed (by Theorem \ref{thm:FreeModuleThm}, (i)) as
\begin{equation}
\label{eqn:poincarePoly}
\sum_{k\in \frac{1}{2}\ZZ}\dim M_k(\rho) t^k = \frac{t^{k_1}+ \ldots + t^{k_{2p}}}{(1 - t^4)(1-t^6)}.
\end{equation}
Now we have:
\begin{itemize}
\item[(i)]If $k\in \ZZ$, $M_k(\rho) = 0$ since $\mathrm{sig}(A_{2p^r})$ is odd (by \eqref{eqn:parityArgument}).
\item[(ii)] If $k<0$, $M_k(\rho) = 0$ since $\rho$ is unitary (Theorem \ref{thm:FreeModuleThm}, (iii)).
\item[(iii)] $M_{1/2}(\rho) = 0$ by \eqref{eqn:weight1/2}.
\item[(iv)] If $k\in 1/2 + \ZZ$ and $k\geq 3/2$, $M_k(\rho) = \chi(\cMbar_{1/2},\cW(\cA_{2p^r}))$, by \eqref{eqn:weight3/2}  and Serre duality. 
\end{itemize}
We therefore know how to compute $M_k(\rho)$ for all $k\in \frac{1}{2}\ZZ$, and we can then solve for the generating weights $k_j$ and their multiplicities $m_j$ by comparing coefficients in \eqref{eqn:poincarePoly}. In particular, note that we may immediately deduce from (i) above that $m_j = 0$ if $j\in \ZZ$, $j<0$ or $j=1/2$. The complete list of multiplicities is given by Table \ref{table:weightMultiplicities}. Recall that we have assumed $p\geq 5$ (though similar formulas can easily be obtained for $p=2,3$) and in the table we use the notation 
$$
\delta \df \frac{1}{8}\left( 2 + \left( \frac{-1}{p^r} \right)\right), \quad 
\epsilon_{\pm} \df \frac{1}{6}\left( 1 \pm \left( \frac{p^r}{3} \right)\right).
$$

\renewcommand{\arraystretch}{1.5}
\begin{center}
\begin{table}[h]
\begin{tabular}{r|l}
\textrm{Weights $k$ } & \textrm{Multiplicities $m_k$}\\
\hline
$1/2$ & $0$ \\
$3/2$& $\frac{13}{24}(p^r+1) - \frac{1}{2}\Tr(L) - \delta  - \epsilon_+$  \\
$5/2$& $\frac{15}{24}(p^r-1) - \frac{1}{2}\Tr(L) + \delta$   \\
$7/2$ & $\frac{17}{24}(p^r+1) - \frac{1}{2}\Tr(L) - \delta  + \epsilon_+$  \\
$9/2$&  $\frac{19}{24}(p^r-1) - \frac{1}{2}\Tr(L) + \delta  + \epsilon_-  $  \\
$11/2$&  $\frac{1}{3}(p^r+1) + \epsilon_+ $   \\
$13/2$&   $\frac{1}{3}(p^r-1) - \epsilon_- $ \\
$15/2$& $-\frac{5}{24}(p^r+1) + \frac{1}{2}\Tr(L) + \delta - \epsilon_+ $\\
$17/2$& $-\frac{7}{24}(p^r-1) + \frac{1}{2}\Tr(L) - \delta - \epsilon_- $\\
$19/2$ & $-\frac{9}{24}(p^r+1) + \frac{1}{2}\Tr(L) + \delta $ \\
$21/2$& $-\frac{11}{24}(p^r-1) + \frac{1}{2}\Tr(L) - \delta + \epsilon_-$\\
$23/2$& $0$ \\
\end{tabular}
\vspace{12pt}
\caption{Weight multiplicities for $\rho = \rho_{A_{2p^r}}$, $p\geq 5$.}
\label{table:weightMultiplicities}
\end{table}
\end{center}

A lot of information can be extracted from Table \ref{table:weightMultiplicities}. First, we record an immediate corollary:
\begin{cor}
The weight $k = 23/2$ never occurs as the generating weight of the free module of vector-valued modular modular forms for $\rho = \rho_{A_{2p^r}}$ (equiv. Jacobi forms of index $p^r$). 
\end{cor}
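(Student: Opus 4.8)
The plan is to obtain $m_{23/2}$ as a single coefficient of the numerator of the Hilbert--Poincar\'e series of $M(\rho)$, $\rho=\rho_{A_{2p^r}}$, and to verify that the resulting combination of Euler characteristics vanishes identically in $p$ and $r$ (equivalently: to justify the last row of Table~\ref{table:weightMultiplicities}).

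First I would invoke the generating-function identity of Theorem~\ref{thm:FreeModuleThm}(i): the numerator $\sum_j t^{k_j}$ equals $(1-t^4)(1-t^6)\sum_k \dim M_k(\rho)\,t^k = (1 - t^4 - t^6 + t^{10})\sum_k\dim M_k(\rho)\,t^k$. Extracting the coefficient of $t^{23/2}$ and using $\tfrac{23}{2} - 4 = \tfrac{15}{2}$, $\tfrac{23}{2}-6 = \tfrac{11}{2}$, $\tfrac{23}{2}-10 = \tfrac{3}{2}$ gives
\[
m_{23/2} = \dim M_{23/2}(\rho) - \dim M_{15/2}(\rho) - \dim M_{11/2}(\rho) + \dim M_{3/2}(\rho).
\]
All four weights are half-integral, so the parity obstruction \eqref{eqn:parityArgument} does not intervene, and each lies in a range where the dimension is given by an Euler characteristic: for $k\in\{11/2,15/2,23/2\}$ because $k>2$ and $\rho_D^*$ is unitary, and for $k=3/2$ by \eqref{eqn:weight3/2}. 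Hence, writing $\chi_k \df \chi(\cMbar_{1/2},\cW_k(A_{2p^r}))$, it remains to prove $\chi_{23/2} - \chi_{15/2} - \chi_{11/2} + \chi_{3/2} = 0$.

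For this I would expand each $\chi_k$ via Proposition~\ref{prop:WeilRepEulerCharacteristic} with $\mathrm{sig}(D)=1$ and check that the coefficient vector $(+1,-1,-1,+1)$ annihilates every summand. The point is that the four weights are chosen so that $2k+\mathrm{sig}(D) = 2k+1 \in\{24,16,12,4\}\equiv 0 \pmod 4$, hence $i^{2k+\mathrm{sig}(D)} = 1$ at each of them; consequently, on these weights $\chi_k$ is an affine-linear function of $k$ (from the main term $|D|(5+k)/24$, the $2$-torsion term which then reduces to $\tfrac{5+k}{24}|D[2]| - \tfrac12\Tr(L|_{\CC[D[2]]})$, and the constant $-\tfrac12\Tr(L)$) plus fixed scalars times the roots of unity $\sqrt i^{\,2k+1}$, $\zeta_{12}^{2k}$, and $\zeta_6^{2k}$. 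The vector $(+1,-1,-1,+1)$ kills the affine part since $1-1-1+1=0$ and $\tfrac{23}{2}-\tfrac{15}{2}-\tfrac{11}{2}+\tfrac{3}{2}=0$, and it kills each periodic part because the values of $\sqrt i^{\,2k+1}$, of $\zeta_{12}^{2k}$, and of $\zeta_6^{2k}$ over $k=\tfrac{23}{2},\tfrac{15}{2},\tfrac{11}{2},\tfrac{3}{2}$ run through $(1,1,-1,-1)$, $(\zeta_{12}^{-1},\zeta_{12}^{3},\zeta_{12}^{-1},\zeta_{12}^{3})$, and $(\zeta_6^{-1},\zeta_6^{3},\zeta_6^{-1},\zeta_6^{3})$ respectively, each of which pairs to zero against $(+1,-1,-1,+1)$. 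Thus $m_{23/2}=0$ for every prime $p\geq 5$ and every $r\geq 1$.

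The only step requiring attention is this last modular bookkeeping: confirming $2k+1\equiv 0\pmod 4$ on all four weights (so that the a priori $k$-dependent factor $i^{2k+1}$ is in fact constant there), and that $2k$ agrees modulo $12$ on $\{23/2,11/2\}$ and on $\{15/2,3/2\}$, hence also modulo $6$. I expect no genuine difficulty here. It is worth emphasizing that the cancellation is purely combinatorial: the arithmetically delicate data --- $\Tr(L)$, the class number $h_p$, and the symbols $\left(\tfrac2p\right),\left(\tfrac3p\right),\left(\tfrac{p^r}{3}\right)$ carried by $\Omega_D(2),\Omega_D(3)$ --- all drop out, which is precisely why weight $23/2$ is excluded uniformly. (Alternatively, one may simply read $m_{23/2}=0$ off the last line of Table~\ref{table:weightMultiplicities}; the argument above is what underlies that entry.)
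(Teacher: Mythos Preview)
Your proposal is correct and follows the same route as the paper: both extract $m_{23/2}$ from the numerator of the Hilbert--Poincar\'e series, reduce the relevant dimensions to Euler characteristics (using unitarity for $k>2$ and the critical-weight result \eqref{eqn:weight3/2} for $k=3/2$), and then observe the vanishing. The paper simply records this as the last line of Table~\ref{table:weightMultiplicities}, whereas you make the underlying cancellation explicit by showing that the vector $(+1,-1,-1,+1)$ kills each summand of Proposition~\ref{prop:WeilRepEulerCharacteristic} on the quadruple $\{23/2,15/2,11/2,3/2\}$; this is a nice way to see directly why the arithmetic data ($\Tr(L)$, $h_p$, Legendre symbols) drop out, without computing any individual $\chi_k$.
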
 

While the fact that the weight $k=1/2$ does not occur as a generating weight is well-known (\cite{EichlerZagier} Theorem 5.7, as it follows from Skoruppa's Theorem \ref{theorem:CriticalWeights} above) the fact that the same is true for the weight $k=23/2$ does not seem to appear anywhere else in the literature concerning vector-valued modular forms for the Weil representation or Jacobi forms.

This `endpoint' symmetry in the number of generating weights suggest that there is a deeper, hidden symmetry in the way the generating weights distribute as $p\to\infty$. In fact, this distribution can be computed explicitly from Table \ref{table:weightMultiplicities}. To elucidate this point, fix $r$ and let $p\to\infty$. As illustrated in Figure \ref{figure:weightDitribution} (generated using {\em Wolfram Mathematica} v. 10.2) for the case $r=1$, it looks like the generating weights have a bi-modal distribution centered around $k=11/2,13/2$. 
\begin{figure}[h]
\centering
\includegraphics[scale=0.75]{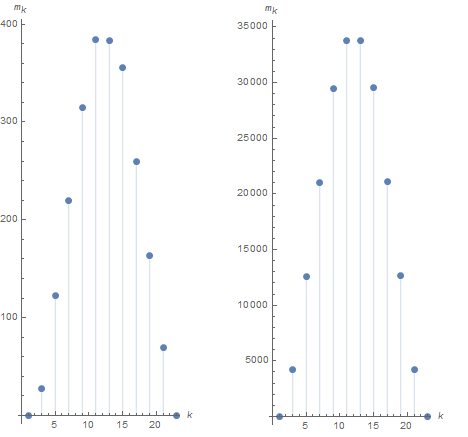}
\caption{Distribution of generating weights for $p=1151, 101281$ and $r=1$. Note that $h_{1151} = 41$ and $h_{101281} = 168$.}
\label{figure:weightDitribution}
\end{figure}

In fact, using our exact formula for $\Tr(L)$ and well-known upper bounds on class numbers of imaginary quadratic fields, we are able to prove the following theorem concerning the precise distribution of the generating weights as $p\to\infty$:

\begin{cor}
\label{c:limitingroots}
Let $\rho = \rho_{A_{2p^r}}$, where $p>3$ is a prime, as above. Let $m_k$ denote the multiplicity of the generating weight $k \in \frac 12 \ZZ$ for $M(\rho)$. Then the values of $\lim_{p\to \infty} \frac{m_k}{\dim \rho}$ are as follows:

\begin{center}
\rm{\begin{tabular}{l | c c c c c c c c c c c c}
Weight $k$ & $\sfrac{1}{2}$ & $\sfrac{3}{2}$ & $\sfrac{5}{2}$ & $\sfrac{7}{2}$ & $\sfrac{9}{2}$ & $\sfrac{11}{2}$ & $\sfrac{13}{2}$ & $\sfrac{15}{2}$ & $\sfrac{17}{2}$ & $\sfrac{19}{2}$ & $\sfrac{21}{2}$ & $\sfrac{23}{2}$ \\
\hline
$\displaystyle{\lim_{p\to \infty}} \frac{m_k}{\dim \rho}$ & $0$ & $\sfrac{1}{48}$ & $\sfrac{3}{48}$ & $\sfrac{5}{48}$ & $\sfrac{7}{48}$ & $\sfrac{8}{48}$ & $\sfrac{8}{48}$ & $\sfrac{7}{48}$ & $\sfrac{5}{48}$ & $\sfrac{3}{48}$ & $\sfrac{1}{48}$ & $0$
\end{tabular}}
\end{center}

\end{cor}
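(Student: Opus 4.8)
The plan is to deduce everything from Table~\ref{table:weightMultiplicities}, which already expresses each multiplicity $m_k$ as an explicit affine function of $p^r$, $\Tr(L)$, $\delta$, and $\epsilon_\pm$; since $\dim\rho = |A_{2p^r}| = 2p^r$, the corollary amounts to dividing each entry by $2p^r$ and letting $p\to\infty$ with $r$ fixed. The only term in the table whose order of magnitude is not immediately visible is $\Tr(L)$, so the one substantive step is to establish $\lim_{p\to\infty}\frac{\Tr(L)}{2p^r}=\frac12$.

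First I would show $\Tr(L)=p^r+O\!\left(p^{r/2+\varepsilon}\right)$ for fixed $r$. By Theorem~\ref{t:TrL}, the terms of $\Tr(L)$ beyond the main term $p^r$ are $\{p^r/4\}\,p^{\lfloor r/2\rfloor}$, which is $O(p^{r/2})$, and $\frac{p^{\lfloor(r+1)/2\rfloor}-1}{2(p-1)}$ times a bounded multiple of $h_p$ (or $h_p^\ast$), which is $O\!\left(p^{\lfloor(r+1)/2\rfloor-1}h_p\right)$. Since $\lfloor(r+1)/2\rfloor-1\le\frac{r-1}{2}$ and, by the Dirichlet class number formula together with the classical bound $L(1,\chi_{-p})\ll\log p$, one has $h_p\ll p^{1/2}\log p$, this error is $O(p^{r/2+\varepsilon})$; dividing by $2p^r$ then gives $\frac{\Tr(L)}{2p^r}\to\frac12$. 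The quantities $\delta$ and $\epsilon_\pm$ take only finitely many values, hence are $O(1)$, so $\frac{\delta}{2p^r},\frac{\epsilon_\pm}{2p^r}\to 0$.

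Finally I would substitute these limits into each row of Table~\ref{table:weightMultiplicities}. A row of the form $m_k=\frac{a}{24}(p^r\pm1)-\frac12\Tr(L)+(\text{bounded})$ yields $\lim\frac{m_k}{\dim\rho}=\frac{a}{48}-\frac14=\frac{a-12}{48}$; a row $m_k=-\frac{a}{24}(p^r\pm1)+\frac12\Tr(L)+(\text{bounded})$ yields $\frac{12-a}{48}$; and the rows $k=11/2,13/2$, which carry no $\Tr(L)$ term, yield $\frac13\cdot\frac12=\frac{8}{48}$. Reading off $a=13,15,17,19$ for $k=3/2,5/2,7/2,9/2$ and $a=5,7,9,11$ for $k=15/2,17/2,19/2,21/2$ (together with $m_{1/2}=m_{23/2}=0$) gives precisely the table in the statement and exhibits the mirror symmetry about $k=6$. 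I do not expect a genuine obstacle here: the entire arithmetic content of the limiting distribution is already packaged in Theorem~\ref{t:TrL}, and the only external input is the elementary estimate $h_p=O(p^{1/2+\varepsilon})$—indeed any bound of the shape $h_p=o(p)$ would suffice, since the power of $p$ multiplying $h_p$ in $\Tr(L)$ is at most $r-1$.
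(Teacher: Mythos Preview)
Your proposal is correct and follows exactly the paper's approach: the paper's own proof is the single line ``Immediate from Table~\ref{table:weightMultiplicities} upon noting that $\lim_{p \to \infty} h_p/p^{\frac 12+\veps} = 0$ for all $\veps > 0$,'' and you have simply spelled out the details of that substitution. Your computations of the limits from each row of the table, the asymptotic $\Tr(L)=p^r+O(p^{r/2+\varepsilon})$ via Theorem~\ref{t:TrL}, and the observation that even $h_p=o(p)$ suffices are all accurate.
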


\begin{proof}
Immediate from Table \ref{table:weightMultiplicities} upon noting that $\lim_{p \to \infty} h_p/p^{\frac 12+\veps} = 0$ for all $\veps > 0$.
\end{proof}

\begin{rmk}
While preparing \cite{FrancMason:Structure}, the authors performed similar computations for representations of $\SL_2(\ZZ)$ arising from irreducible representations of $\SL_2(\FF_p)$. If one restricts to representations of $\PSL_2(\ZZ)$, so that weight one is not an issue, then the weight multiplicity formulae of \cite{CandeloriFranc} allow one to prove that whenever one has a family of representations $\rho_n$ such that the trivial representation does not occur in the $\rho_n$, and the main term of the trace of the exponents approaches $\frac 12 \dim \rho_n$, then the generating weights have the limiting distribution $(0,\frac 1{12}, \frac{3}{12}, \frac{4}{12}, \frac{3}{12},\frac {1}{12},0)$ for weights $(0,2,4,6,8,10,12)$, respectively. It would be of interest to find a more conceptual explanation for these distributions. 
\end{rmk}

\renewcommand\refname{References}

\bibliographystyle{alpha}
\bibliography{RootsWeil}

\end{document}